\definecolor{darkgreen}{rgb}{0,0.7,0}
\renewcommand{\_}{\hbox{-}}
\def\blfootnote{\xdef\@thefnmark{}\@footnotetext}
\newcommand*{\coloneq}{\mathrel{\rlap{%
  \raisebox{0.3ex}{$\m@th\cdot$}}%
  \raisebox{-0.3ex}{$\m@th\cdot$}}%
  =}
\theoremstyle{plain}
\newtheorem{lemma}[equation]{Lemma}
\newtheorem{prop}[equation]{Proposition}
\newtheorem{cor}[equation]{Corollary}
\mathchardef\ldotp="613A   
\newcommand{\dd}{\mathbin{\ldotp\!\ldotp}}
\newcommand{\reals}{\mathbb{R}}
\newcommand{\ints}{\mathbb{Z}}
\newcommand{\s}{\kern -0.5pt 's\xspace}
\renewcommand{\t}{\kern -0.5pt 't\xspace}
\newcommand{\hsmash}[1]{\hbox to 0pt{\hss #1 \hss}}
\newcommand{\Conv}{\mathcal{A}}
\newcommand{\ConvEuc}{\mathcal{A}_{\mkern -1mu E\mkern -2mu}}
\newcommand{\ConvHyp}{\mathcal{A}_{\mkern -2mu H\mkern -2mu}}
\newcommand{\PolygonSpace}{\mathcal{L}}
\newcommand{\nlow}{n}
\newcommand{\nthin}{{n\mkern -1mu}}
\newcommand{\edge}{\varepsilon}
\newcommand{\sbargon}{$\bar{\mathbf{s}}$\_gon\xspace}
\newcommand{\shatgon}{$\hat{\mathbf{s}}$\_gon\xspace}
\newcommand{\shatgons}{$\hat{\mathbf{s}}$\_gons\xspace}
\begin{document}

\title{Dual-cyclic polytopes of convex planar\\
polygons with fixed vertex angles}
\author{Lyle Ramshaw\footnote{%
\kern 0.9pt\protect\includegraphics[scale=0.06]{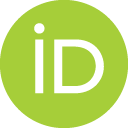}
\protect\href{https://orcid.org/0000-0002-9113-1473}{//orcid.org/0000-0002-9113-1473}}
\ and James B.~Saxe}

\maketitle

\begin{abstract}
If we fix the angles at the vertices of a convex $n$\_gon in the plane, the lengths of its $n$ edges must satisfy two linear constraints in order for it to close up.  If we also require unit perimeter, we get vectors of $n$ edge lengths that form a convex polytope of dimension $n-3$, each facet of which consists of those $n$\_gons in which the length of a particular edge has fallen to zero.  Bavard and Ghys suggest requiring unit area instead, which gives them a hyperbolic polytope.  Those two polytopes are combinatorially equivalent, so either is fine for our purposes.

Such a fixed-angles polytope is combinatorially richer when the angles are well balanced.  We say that fixed external angles are \emph{majority dominant} when every consecutive string of more than half of them sums to more than $\pi$.  When~$n$ is odd, we show that the fixed-angles polytope for any majority-dominant angles is dual to the cyclic polytope $C_{n-3}(n)$; so it has as many vertices as are possible for any polytope of dimension $n-3$ with~$n$ facets.  To extend that result to even~$n$, we require that the angles also have \emph{dipole tie-breaking}: None of the $n$ strings of length $n/2$ sums to precisely $\pi$, and the $n/2$ that sum to more than $\pi$ overlap as much as possible, all containing a particular angle.

Fixing the vertex angles is uncommon, however; people more often fix the edge lengths.  That is harder, in part because fixed-lengths $n$\_gons may not be convex, but mostly because fixing the lengths constrains the angles nonlinearly --- so the resulting moduli spaces, called \emph{polygon spaces}, are curved.  Consider the polygon space for some edge lengths that sum to~$2\pi$.  Using Schwarz--Christoffel maps, Kapovich and Millson show that the subset of that polygon space in which the $n$\_gons are convex and traversed counterclockwise is homeomorphic to the fixed-angles polytope above, for those same fixed values.  Each such subset is thus a topological polytope; and it is dual cyclic whenever the fixed lengths are majority dominant and, for even~$n$, have dipole tie-breaking.
\end{abstract}

\blfootnote{MSC-class: 52B11 (Primary) 51M20, 52B05 (Secondary)}

\section{Introduction}
\label{sect:Intro}

\subsection{The fixed-angles polytopes}

The external angles at the vertices of a convex planar $n$\_gon must sum to $2\pi$.  For convenience, we guarantee that sum via rescaling:  Given $n$ positive numbers~$s_1$ through~$s_n$ with sum $S\coloneq s_1+\cdots+s_n$, we consider those $n$\_gons in the Euclidean plane whose $k^\text{th}$ vertices have external angle~$2\pi s_k/S$, for all $k$.  We view the~$s_k$ cyclically, with $s_n$ followed by $s_1$; so we group them into a necklace, written in square brackets: $\mathbf{s}=[s_1,\ldots,s_n]=[s_2,\ldots,s_n,s_1]$.

In order for such a fixed-angles $n$\_gon to close up in $x$ and in~$y$, its edge lengths must satisfy two linear constraints; let $L_\mathbf{s}$ be the subspace of $\reals^n$ where those constraints hold, which has dimension $n-2$.  Since edge lengths must be nonnegative, the vectors of edge lengths form the cone $K_\mathbf{s}\coloneq L_\mathbf{s}\cap \reals_{\ge 0}^n$.

The $n$\_gons along any ray in the cone $K_\mathbf{s}$ differ only by magnification, and we can select one representative from each ray by requiring either unit perimeter or unit area, as we discuss in Section~\ref{sect:EuclideanPolytopes}.  If we require unit perimeter, our vectors of edge lengths form a Euclidean convex polytope of dimension $n-3$.  We denote it $\ConvEuc^n[\mathbf{s}]$, the letter ``$\mathcal{A}$'' reminding us that the vertex Angles are here held fixed.  Bavard and Ghys~\cite{BavardGhys} suggest requiring unit area instead, which gives them a hyperbolic polytope $\ConvHyp^n[\mathbf{s}]$.  Since the polytopes $\ConvEuc^n[\mathbf{s}]$ and $\ConvHyp^n[\mathbf{s}]$ are combinatorially equivalent, we can, for our combinatorial purposes, denote the \emph{fixed-angles polytope} for the angles in $\mathbf{s}$ simply as $\Conv^\nthin[\mathbf{s}]$, meaning either one.  The Euclidean is more elementary, but the hyperbolic is more elegant.

The total turn from one edge of a fixed-angles $n$\_gon to another is controlled by the sum of a string of consecutive entries of~$\mathbf{s}$; and it is key how those sums compare to $S/2$.  When such a sum equals $S/2$, the $n$\_gon has a pair of antiparallel edges --- a case that requires special treatment.  The fixed-angles polytope has a vertex for each way of partitioning $\mathbf{s}$ into three strings whose sums are less than~$S/2$.  The three edges that constitute the gaps between those strings can extend to form a unit-perimeter (or unit-area) triangle in which the other $n-3$ edges have shrunk to points.  The resulting vertex of the fixed-angles polytope is simple, since it lies on $n-3$ facets in dimension $n-3$.  And the number of such simple vertices turns out to be maximized when the angles in $\mathbf{s}$ are well balanced in an appropriate sense.

\subsection{Cyclic polytopes}
Recall~\cite{Ziegler} that a \emph{cyclic polytope} $C_d(m)$, for $1\le d<m$, is the convex hull of $m$ points along the \emph{moment curve} in $\mathbb{R}^d$, the curve given parametrically by $t\mapsto (t,t^2,\ldots,t^d)$.  The combinatorial structure of that convex hull doesn\t depend upon which $m$ points we choose, as long as they are distinct.  The cyclic polytope $C_d(m)$ is \emph{neighborly}, meaning that every set of at most $d/2$ vertices forms a face.  It also has, simultaneously for all~$k$, as many $k$\_faces as are possible for any $d\mkern -1mu$\_polytope with $m$ vertices.  That property has no standard name, but let\s call it being \emph{max-faced}.

The cyclic polytope $C_d(m)$ is particularly straightforward when $m-d$ is tiny.  The polytope $C_d(d+1)$, for example, is just the $d\mkern -1mu$\_simplex $\Delta_d$, which is self-dual.  A standard exercise~\cite[p.~24]{Ziegler} shows that the polytope $C_d(d+2)$ is $(\Delta_{\lfloor d/2\rfloor}\times \Delta_{\lceil d/2\rceil})^\Delta$, the dual of the Cartesian product of two simplices of nearly equal dimension.  We shed some light on the next-more-complicated case by showing that
$\ConvEuc^{d+3}[\mathbf{s}]^\Delta$, the dual of a fixed-lengths polytope, is an instance of $C_d(d+3)$ for any $d+3$ fixed angles $\mathbf{s}$ that are appropriately well balanced.

While the polytopes $C_d(d+1)$ and $C_d(d+2)$ have lots of symmetries, the level of symmetry that remains in $C_d(m)$ when $m\ge d+3$ depends on the parity of the dimension~$d$~\cite{Kaibel}.  When~$d$ is even, the polytope $C_d(m)$ has $2m$ combinatorial automorphisms that dihedrally permute its $m$ vertices.  When~$d$ is odd, on the other hand, it has just four:  We can swap the first and last vertices, reverse the order of all of the vertices, or both, or neither.  For $d$ of either parity, there are instances of $C_d(m)$ whose combinatorial automorphisms all come from Euclidean symmetries.

\subsection{When are fixed angles well balanced?}

In Section~\ref{sect:BreakingTies}, we study how the combinatorial structure of the fixed-angles polytope $\Conv^\nthin[\mathbf{s}]$ changes as the angles in $\mathbf{s}$ vary.  The structure changes each time that the sum of some substring of $\mathbf{s}$ passes through a tie with the sum of the complementary substring, that tie generating a pair of antiparallel edges.  Of the structures before and after that change, the richer is the one in which the longer substring has the larger sum, so that $\mathbf{s}$ is better balanced.

We say that the angles in~$\mathbf{s}$ are \emph{majority dominant} when every consecutive substring of length more than~$n/2$ sums to more than $S/2$.  The complementary strings, which are minorities, then sum to less than $S/2$.  Majority dominance says nothing about the strings of length precisely $n/2$, which exist only when~$n$ is even.  When $n$ is odd and~$\mathbf{s}$ is majority dominant, we show that the fixed-angles polytope is dual cyclic by constructing, in Section~\ref{sect:nOdd}, an explicit duality between the face lattices of $\Conv^\nthin[\mathbf{s}]$ and $C_{n-3}(n)$. 

When $\mathbf{s}$ is majority dominant but $n$ is even, the dual polytope $\Conv^\nthin[\mathbf{s}]^\Delta$ may not be cyclic, but it is neighborly.  If there are no substrings of $\mathbf{s}$ that sum to~$S/2$, it is also simplicial, which implies that it is max-faced, by the McMullen upper-bound theorem~\cite[p.~254]{Ziegler}.  We show in Section~\ref{sect:nEven} that $\Conv^\nthin[\mathbf{s}]^\Delta$, beyond being max-faced, is precisely an instance of $C_{n-3}(n)$ if we require that $\mathbf{s}$ also have \emph{dipole tie-breaking}, meaning that, of its~$n$ substrings of length $n/2$, the $n/2$ that contain some particular entry all sum to more than~$S/2$.  The others then contain the opposite entry and sum to less than $S/2$.

What can we conclude about equiangular $n$\_gons, whose fixed angles are perfectly balanced?  Since the sequence $(1,\ldots,1)$ is majority dominant, the fixed-angles polytope $\Conv^\nthin[1,\ldots,1]$ for equiangular $n$\_gons is dual cyclic when $n$ is odd, with $\Conv^\nthin[1,\ldots,1]^\Delta$ being an instance of $C_{n-3}(n)$.\footnote{The authors thank G\"unter Ziegler for pointing out to them, in a 2017 email, that the fixed-angles polytope $\Conv^7[1,\ldots,1]$ for equiangular heptagons is dual to $C_4(7)$.}  When $n$ is even, the polytope $\Conv^\nthin[1,\ldots,1]^\Delta$ continues to have symmetries that permute its $n$ vertices dihedrally; it thus can\t be dual cyclic, since $C_{n-3}(n)$ then has only four combinatorial automorphisms.  Instead, the sequence $(1,\ldots,1)$ acquires substrings that sum to $S/2$ when $n$ is even; so the polytope $\Conv^\nthin[1,\ldots,1]^\Delta$, while still neighborly, is not simplicial and is hence neither max-faced nor cyclic.

\subsection{The fixed-lengths-convex top-polytopes}

Fixing the edge lengths of polygons is more popular than fixing their vertex angles.  Fixed edge lengths arise, for example, when analyzing a cyclic planar linkage with rigid links and revolute joints, one of the many kinds of linkages studied in robotics~\cite{Farber}.  Fixing the edge lengths leads to harder problems than fixing the vertex angles, both because the resulting $n$\_gons may not be convex and, more importantly, because the constraints that fixed lengths impose on the angles are nonlinear, so the resulting moduli spaces are curved.

For now, we impose convexity as a side constraint.  Call an $n$\_gon \emph{ccw\_convex} when it is convex and traversed counterclockwise, so its external angles are nonnegative.  Given a necklace $\mathbf{s}\coloneq [s_1,\ldots,s_n]$ of positive numbers, we consider those ccw-convex planar $n$\_gons whose $k^\text{th}$ edge has length $s_k$, for all $k$; and we say that two $n$\_gons have the same \emph{shape} when they differ by translation or rotation.  The resulting moduli space of shapes is a topological polytope, as we discuss next; so we call it the \emph{fixed-lengths-convex top-polytope} for $\mathbf{s}$.  We denote it $\PolygonSpace^n_c[\mathbf{s}]$, where the ``$\mathcal{L}$'' reminds us that the edge Lengths are here held fixed, and the subscript $c$ means ccw-convex.

Kapovich and Millson~\cite{KapMill95} use Schwarz--Christoffel maps to transform each ccw\_convex $n$\_gon whose edge lengths are given by~$\mathbf{s}$ into an $n$\_gon whose external angles are proportional to~$\mathbf{s}$ (this $n$\_gon also being ccw-convex).  As we review in Section~\ref{sect:FixedLengths}, they thereby construct a homeomorphism $h_{\mathbf{s}}\colon \PolygonSpace_c^n[\mathbf{s}]\to\Conv^\nthin[\mathbf{s}]$ from the fixed-lengths-convex top-polytope $\PolygonSpace_c^n[\mathbf{s}]$ to the fixed-angles polytope~$\Conv^\nthin[\mathbf{s}]$.  The space $\PolygonSpace_c^n[\mathbf{s}]$ is thus homeomorphic to a polytope by a preferred homeomorphism, which makes it a \emph{topological polytope}.  By our earlier results, it is dual to the cyclic polytope $C_{n-3}(n)$ whenever the fixed lengths $\mathbf{s}$ are majority dominant and, if $n$ is even, also have dipole tie-breaking.  

When $\mathbf{s}$ has no substrings with the same sums as their complements, both the  fixed-lengths-convex top-polytope $\PolygonSpace_c^n[\mathbf{s}]$ and the fixed-angles polytope $\Conv^\nthin[\mathbf{s}]$ are smooth manifolds with corners.  And it seems likely that they are actually diffeomorphic, as smooth manifolds with corners.   But we close our final Section~\ref{sect:FixedLengths} by warning that the Kapovich--Millson homeomorphism $h_{\mathbf{s}}\colon \PolygonSpace_c^n[\mathbf{s}]\to\Conv^\nthin[\mathbf{s}]$ is not a diffeomorphism, even in those well-behaved cases.

\subsection{Gluing polytopes together}

In this paper, we study our polytopes and our top-polytopes one at a time.  But they are often glued together into larger spaces.

In the fixed-angles polytope $\Conv^\nthin[\ldots,p,q,\ldots]$, consider the facet where the edge joining the vertices with angles $p$ and $q$ has shrunk to a point.  Those vertices then coalesce into a \emph{supervertex} with angle $p+q$; so that facet is a copy of the polytope $\Conv^{n-1}[\ldots,p+q,\ldots]$.  The polytope $\Conv^\nthin[\ldots,q,p,\ldots]$, in which~$p$ and~$q$ have swapped places, also has a copy of $\Conv^{n-1}[\ldots,p+q,\ldots]$ as a facet; and we can glue $\Conv^\nthin[\ldots,p,q,\ldots]$ to $\Conv^\nthin[\ldots,q,p,\ldots]$ along those matching facets.

Given $n$ positive numbers that sum to $2\pi$, suppose that we assemble them into necklaces in all $(n-1)!$ cyclic orders, construct the resulting fixed-angles polytopes, and then glue them together along all of their matching pairs of facets.  The result is a Euclidean (or hyperbolic) cone-manifold that we can view as the moduli space of unit-perimeter (or unit-area) planar $n$\_gons whose vertices have our $n$ specified external angles, in any order.  The researchers who do such gluing use the hyperbolic polytopes, since their more elegant geometries lead to cone-manifolds that may actually be orbifolds or manifolds~\cite{Fillastre}.\footnote{Also, they often want to identify each fixed-angles $n$\_gon with its reflection; so they assemble their $n$ fixed angles, not into into $(n-1)!$ necklaces, but into $(n-1)!/2$ bracelets.}

The fixed-lengths world typically develops in the opposite order.  Rather than constructing the fixed-lengths-convex top-polytopes and gluing them together, people instead construct the big space at the outset.  The \emph{polygon space}~\cite{FarberSchutz,HausmannRodriguez,KapMill95} for the \emph{length vector}~$\mathbf{s}$ is the moduli space of shapes of planar $n$\_gons whose $k^\text{th}$ edge has length $s_k$, for all $k$, with no requirement that they be ccw-convex.  We denote that space simply as $\PolygonSpace^\nthin[\mathbf{s}]$, with no subscript of ``$c$''\kern -1pt.  The polygon space $\PolygonSpace^n[\mathbf{s}]$ is a compact, orientable smooth manifold of dimension $n-3$, with the possible exception of some isolated quadratic singular points.  Only as an after-thought do people partition this manifold, based on the cyclic order of the complex phases of the $n$\_gon\s edges, into $(n-1)!$ top-polytopes.

\section{The fixed-angles polytopes}
\label{sect:EuclideanPolytopes}

We now start our work in earnest by fixing the vertex angles of polygons in the Euclidean plane.  Given a necklace $\mathbf{s}\coloneq[s_1,\ldots,s_n]$ of $n$ positive numbers with sum $S\coloneq s_1+\cdots+s_n$, we define an \emph{\shatgon} to be an $n$\_gon whose $k^\text{th}$ vertex has external angle $2\pi s_k/S$.  The hat accent indicates that it is the vertex angles that are fixed.  When we later use the $s_k$ as the fixed lengths of the edges of an $n$\_gon, we will call the result an \emph{\sbargon}, with a bar accent.

While the angle at the $k^\text{th}$ vertex of an \shatgon is the quotient $2\pi s_k/S$, we refer to $s_k$ itself as the \emph{shangle} of that vertex, where ``shangle'' abbreviates ``share of the angle''\kern -1pt.  So the external angles are proportional to the shangles.

Each edge of an \shatgon constitutes a gap between elements of the shangle necklace $\mathbf{s}$.  We denote by $\edge_k$ the edge that connects the vertices with shangles $s_{k-1}$ and $s_k$.\footnote{The choice of how to offset the numbering of the edges from that of the vertices seems unavoidably arbitrary.  When $n$ is odd, it works well to let $k^\text{th}$ edge be the one opposite the $k^\text{th}$ vertex.  Indeed, this is quite standard for triangles, with the edges $a$, $b$, and $c$ being opposite the vertices $A$, $B$, and $C$.  But $n$ is not always odd.} We describe the shape of an \shatgon as the vector $\pmb{\ell}= (\ell_1,\ldots,\ell_n)$, where $\ell_k$ is the length of the edge $\edge_k$.  Closure in $x$ and in $y$ imposes two linear constraints, limiting the vector~$\pmb{\ell}$ to a linear subspace $L_{\mathbf{s}}\subset\mathbb{R}^n$ of dimension $n-2$.  The further constraint that $\ell_k\ge 0$ for all $k$ limits the shapes of \shatgons to the cone $K_{\mathbf{s}}\coloneq L_{\mathbf{s}}\cap \reals^n_{\ge 0}$.  

We next select one representative from each ray in $K_{\mathbf{s}}$ of similar \shatgons.  One option selects, from each ray, the \shatgon that has unit perimeter.  The resulting length vectors form a Euclidean convex polytope that we denote $\ConvEuc^n[\mathbf{s}]$, lying in the unit-perimeter $(n-3)$\_flat given by $\sum_k\ell_k=1$.  As a more sophisticated alternative, Bavard and Ghys~\cite{BavardGhys} select, as their representative, the \shatgon with unit area.  The functional on the $(n-2)$\_space $L_\mathbf{s}$ that measures area is a quadratic form with signature $(1,n-3)$; so the unit-area subset of $L_\mathbf{s}$ forms a hyperboloid of two sheets, each sheet of which provides a Minkowski model for real hyperbolic $(n-3)$\_space.  The nonnegative orthant $\reals_{\ge 0}^n$ cuts out, from one of those sheets, a hyperbolic convex polytope $\ConvHyp^n[\mathbf{s}]$.  We call the polytopes $\ConvEuc^n[\mathbf{s}]$ and $\ConvHyp^n[\mathbf{s}]$ the \emph{fixed-angles polytopes} for the shangle necklace $\mathbf{s}$.

As an example, Figure~\ref{fig:ConvComb} shows the Euclidean fixed-angles polytope $\ConvEuc^5[\mathbf{q}]$ whose shangle necklace is $\mathbf{q}=[1,3,2,4,5]$.  That polytope is a polygon because $n-3=2$; and it is a quadrilateral, rather than a pentagon, because the edge in a $\hat{\mathbf{q}}$\_gon between the vertices with shangles $4$ and $5$ (and hence with angles $96^\circ$ and $120^\circ$) --- the brown edges drawn as the bases in Figure~\ref{fig:ConvComb} --- can\t shrink to a point while maintaining unit perimeter.

\begin{figure}[t]
\small
    \begin{center}
		\includegraphics[scale=0.9]{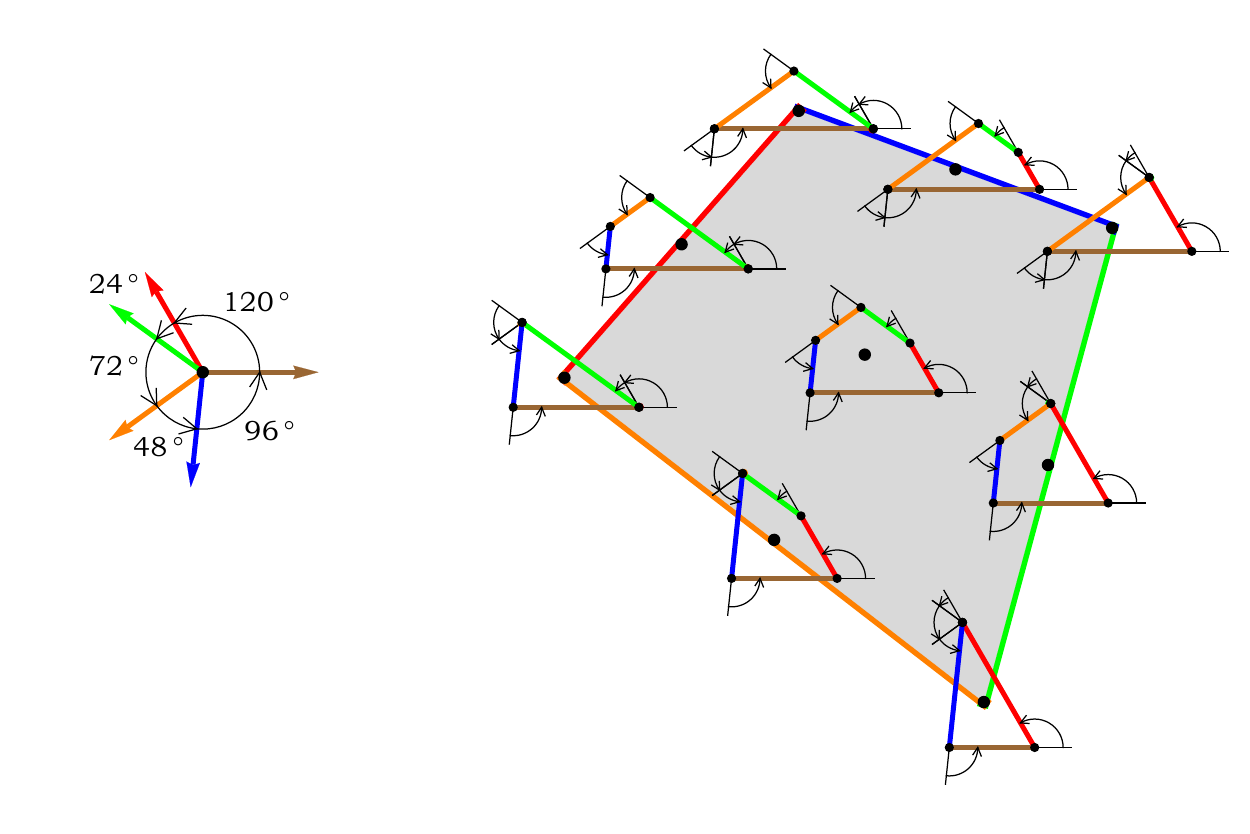}
	\end{center}
    \vspace*{-10 pt}
	\caption{The fixed-angles polytope $\ConvEuc^5[\mathbf{q}]$ for $\mathbf{q}=[1,3,2,4,5]$ is a quadrilateral.  Rescaling $\mathbf{q}$ to sum to $360$ degrees gives $[24,72,48,96,120]$, by the way.  The sample $\hat{\mathbf{q}}$\_gons shown, which all have unit perimeter, are at the four vertices of the quadrilateral $\ConvEuc^5[\mathbf{q}]$, the midpoints of its four edges, and the centroid of its four vertices (which differs from the centroid of its interior).}
	\label{fig:ConvComb}
\end{figure}

As we move around in the quadrilateral $\ConvEuc^5[\mathbf{q}]$, the lengths of the $\hat{\mathbf{q}}$\_gon edges vary affinely.  The length of a $\hat{\mathbf{q}}$\_gon\s red edge varies linearly with its distance away from the red side of $\ConvEuc^5[\mathbf{q}]$, the side along which the red edges have shrunk to points; and similarly for green, orange, and blue.  As for brown, the quadrilateral $\ConvEuc^5[\mathbf{q}]$ is rotated in Figure~\ref{fig:ConvComb} so that the length of a $\hat{\mathbf{q}}$\_gon\s brown base varies affinely with its height on the page; but the horizontal line where that length falls to $0$ passes well below the entire quadrilateral $\ConvEuc^5[\mathbf{q}]$.

\subsection{Trigon and digon vertices}

The structure of the fixed-angles polytope $\ConvEuc^n[\mathbf{s}]$ depends upon the sums of consecutive substrings of the shangle necklace $\mathbf{s}$.  We refer to the sum of such a substring as its \emph{weight}, as distinguished from its length.  Given a necklace $\mathbf{s}=[s_1,\ldots,s_n]$ with total weight $s_1+\cdots+s_n=S$, we call a substring of~$\mathbf{s}$ either \emph{light}, \emph{tied}, or \emph{heavy} according as its weight is less than, equal to, or greater than $S/2$.  If any substring of $\mathbf{s}$ is tied, then its complement is also tied, and they form a \emph{substring tie} in $\mathbf{s}$.  The \emph{width} of that tie, which we denote $w$, is the minimum of the lengths of the two complementary substrings; so $w\le n/2$.

When an edge $\edge_k$ of the \shatgon shrinks to a point, the vertices at its two ends, which have shangles $s_{k-1}$ and $s_k$, coalesce into a \emph{supervertex} of combined shangle $s_{k-1}+s_k$.  This can\t happen when $s_{k-1}+s_k> S/2$, that is, when the string $(s_{k-1}, s_k)$ is heavy, since external angles of convex polygons can\t exceed~$\pi$.  Restricting the edge $\edge_k$ to have length $0$ constrains us to the subset $\ConvEuc^{n-1}[\mathbf{s}']\subseteq\ConvEuc^n[\mathbf{s}]$, where $\mathbf{s}'$ denotes $\mathbf{s}$ with its separate shangles $s_{k-1}$ and $s_k$ replaced by their sum.  That subset is empty when $s_{k-1}+s_k>S/2$, is the single point that constitutes all of $\ConvEuc^n[\mathbf{s}]$ when $s_{k-1}+s_k=S/2$, and is otherwise a facet of $\ConvEuc^n[\mathbf{s}]$.  More generally, the edges $\edge_i$ through $\edge_j$, for $i\le j$, can shrink to points simultaneously only when the string $(s_{i-1},\ldots,s_j)$ isn\t heavy.

A typical vertex of the fixed-angles polytope $\ConvEuc^n[\mathbf{s}]$ arises from some way to partition the shangle necklace $\mathbf{s}$ into three light substrings.  The three edges in the gaps between those substrings can extend to form a triangular \shatgon of unit perimeter, with the other $n-3$ edges all of length $0$.  The corresponding point in the polytope $\ConvEuc^n[\mathbf{s}]$ is thus a simple vertex, which we call a \emph{trigon vertex}.

Each substring tie in the shangle necklace $\mathbf{s}$ generates an atypical vertex.  The two edges in the gaps between the tied substrings are antiparallel, since the total turn involved in going from each to the other is $\pi$.  Those two edges can thus grow to length $\frac{1}{2}$ while all of the other edges shrink to length $0$, producing an \shatgon that is a digon of unit perimeter.  We call the resulting vertex of the fixed-angles polytope $\ConvEuc^n[\mathbf{s}]$ a \emph{digon vertex}.

If a necklace $\mathbf{s}$ is free of substring ties, then all of the vertices of $\ConvEuc^n[\mathbf{s}]$ are trigon vertices, so $\ConvEuc^n[\mathbf{s}]$ is a simple polytope.   A digon vertex may or may not be simple.  To explore that question, we investigate the vertex figures of the vertices of $\ConvEuc^n[\mathbf{s}]$.

\subsection{The vertex figures}

The vertex figure of any simple vertex is a simplex, and we can see this concretely for a trigon vertex of $\ConvEuc^n[\mathbf{s}]$.  We cut off a trigon vertex by constraining the lengths of its three long edges to sum to $1-\epsilon$, for some small $\epsilon$.  The other $n-3$ edges must then have lengths that sum to $\epsilon$, and they can partition that~$\epsilon$ among themselves however they like, closure then being achieved by suitable adjustments of the lengths of the long edges.  So the vertex figure is a $\Delta_{n-4}$.

Now consider a digon vertex that results from a substring tie of width $w$.  Suppose that we constrain the perpendicular distance between the two antiparallel edges to be some small~$\epsilon$.  The remaining $n-2$ edges form two polygonal chains that connect the ends of the antiparallel edges, say the left chain with $w-1$ edges and the right with $n-w-1$.  The edges in each chain must travel an overall perpendicular distance of $\epsilon$, and they can partition that travel among themselves however they like.  The structure of the left chain thus corresponds to a point in the simplex $\Delta_{w-2}$ and that of the right chain to a point in $\Delta_{n-w-2}$.  So the overall vertex figure is the product $\Delta_{w-2}\times\Delta_{n-w-2}$.  

The product $\Delta_{w-2}\times\Delta_{n-w-2}$ is empty when $w=1$, since $\Delta_{-1}=\emptyset$; and it is the simplex $\Delta_{n-4}$ when $w=2$, since $\Delta_{0}$ is a single point.  Once $w\ge 3$ (which implies that $n-w\ge 3$ also), that vertex figure is a polytope of dimension $n-4$ with $n-2$ facets.  So a digon vertex of $\ConvEuc^n[\mathbf{s}]$ arising from a tie of width $w\ge 3$ is nonsimple.

\subsection{Classes of fixed-angles polytopes}  
What classes arise for fixed-angles polytopes?  The polytope $\ConvEuc^n[\mathbf{s}]$ is empty when $\max(s_1,\ldots,s_n)>S/2$, because no convex $n$\_gon can have an external angle that exceeds $\pi$.  It is a single point when $\max(s_1,\ldots,s_n)=S/2$.  The shangle necklace $\mathbf{s}$ then has a substring tie of width $w=1$, and the polytope $\ConvEuc^n[\mathbf{s}]$ is the isolated digon vertex arising from that tie.  When $\max(s_1,\ldots,s_n)<S/2$, all of the external angles are less than $\pi$.  Given any circle, we can construct an \shatgon whose edges are all tangent to that circle; such a polygon, by the way, is called \emph{tangential}.  And we can rescale our tangential \shatgon to have unit perimeter.  Since all of its edges have positive length, a small enough ball around that \shatgon in the unit-perimeter $(n-3)$\_flat $\sum_k\ell_k=1$ will lie in the interior of $\ConvEuc^n[\mathbf{s}]$, which is thus a polytope of the full dimension $n-3$.

When $\ConvEuc^n[\mathbf{s}]$ has dimension $n-3$, all of its trigon vertices are simple, and any digon vertices arising from ties of width $2$ are also simple.  For example, replacing the necklace $\mathbf{q}=[1,3,2,4,5]$ in Figure~\ref{fig:ConvComb} with $\mathbf{q}'=[2,4,3,4,5]$ would give a tie of width $2$ because $2+4+3=4+5$.  The horizontal line where the brown base falls to length zero would then have risen to just touch the lowest, orange-to-green vertex of the fixed-angles quadrilateral $\ConvEuc^5[\mathbf{q}']$; but that vertex would remain simple. 

A tie of width $w\ge 3$, however, gives a digon vertex that is nonsimple, lying on $n-2$ facets rather than only $n-3$.  The first examples of this arise when $n=6$ and $w=3$.  Consider, for example, the fixed-angles polyhedron $\ConvEuc^6[1,\ldots,1]$ for equiangular hexagons.  It is combinatorially a bipyramid over a triangle --- geometrically, an equilateral triangle with cube-corners on both sides.  Each pair of antiparallel hexagon edges generates a vertex of the triangle, and four faces of $\ConvEuc^6[1,\ldots,1]$ meet at that vertex as at a vertex of an octahedron.

In summary:

\begin{prop}
Let $\mathbf{s}=[s_1,\ldots,s_n]$ be a necklace of positive numbers and let $S=s_1+\cdots+s_n$ be their sum.  The Euclidean fixed-angles polytope $\ConvEuc^n[\mathbf{s}]$
\begin{itemize}
\item is empty when $\max(s_1,\ldots,s_n)>S/2$;
\item is an isolated point when $\max(s_1,\ldots,s_n)=S/2$, that point being the digon vertex arising from the resulting substring tie of width $w=1$;
\item and has the full dimension of $n-3$ when $\max(s_1,\ldots,s_n)<S/2$.
\end{itemize}
In the third case, its trigon vertices are all simple, as are any digon vertices arising from ties of width $2$.  But a digon vertex, if any, arising from a tie of width $w\ge 3$ is nonsimple, having $\Delta_{w-2}\times \Delta_{n-w-2}$ as its vertex figure. 
\end{prop}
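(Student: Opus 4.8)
The plan is to recognize that this proposition largely collects facts established in the preceding discussion, so that most of the proof consists of assembling those pieces, with only the middle bullet --- that $\ConvEuc^n[\mathbf{s}]$ is a single point when $\max(s_1,\ldots,s_n)=S/2$ --- requiring a self-contained argument. For the first bullet I would recall that every external angle of a convex polygon lies in $[0,\pi]$, while the $k^\text{th}$ external angle of an \shatgon is $2\pi s_k/S$, which exceeds $\pi$ exactly when $s_k>S/2$; so no \shatgon exists and $\ConvEuc^n[\mathbf{s}]=\emptyset$ whenever $\max(s_1,\ldots,s_n)>S/2$. For the third bullet I would invoke the tangential-polygon construction: when every $s_k<S/2$, all external angles lie in $(0,\pi)$, so the $n$ tangent lines to a fixed circle whose outward normals point in the successive partial sums of those angles bound a nondegenerate \shatgon with all edge lengths positive; rescaled to unit perimeter, its length vector satisfies every inequality $\ell_j\ge 0$ strictly and hence lies in the relative interior of $\ConvEuc^n[\mathbf{s}]$ within the flat $L_{\mathbf{s}}\cap\{\sum_k\ell_k=1\}$, which has dimension $n-3$ because the functional $\sum_k\ell_k$ is not identically zero on $L_{\mathbf{s}}$. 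So $\ConvEuc^n[\mathbf{s}]$ has the full dimension $n-3$.

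For the simplicity claims in the third case I would reread the vertex-figure computations of the previous subsection. At a trigon vertex, cutting by the hyperplane on which the $n-3$ short lengths sum to a small $\epsilon$ lets those lengths range freely over a $\Delta_{n-4}$, while the three long lengths are recovered uniquely and positively from the two closure equations together with the equation that they sum to $1-\epsilon$; the resulting $3\times 3$ system is invertible because no two of the three long edge directions coincide (consecutive ones differ by a turn in $(0,\pi)$, each of the three substrings being light), so the vertex figure is $\Delta_{n-4}$ and the vertex is simple. At a digon vertex from a width-$w$ tie, the two polygonal chains joining the antiparallel edges distribute their perpendicular travel freely over $\Delta_{w-2}$ and $\Delta_{n-w-2}$, so the vertex figure is $\Delta_{w-2}\times\Delta_{n-w-2}$; this is $\Delta_0\times\Delta_{n-4}=\Delta_{n-4}$ when $w=2$, making the vertex simple, whereas for $w\ge 3$ both factors have dimension at least $1$ (as $n-w\ge w\ge 3$), the product is not a simplex, and the vertex lies on $(w-1)+(n-w-1)=n-2>n-3$ facets, hence is not simple.

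That leaves the middle bullet, which I would prove directly. Assume $s_k=S/2$, so the external angle at the $k^\text{th}$ vertex of an \shatgon is $\pi$ and the edges $\edge_k$ and $\edge_{k+1}$ are antiparallel; normalize the plane so $\edge_k$ points along $+x$ and $\edge_{k+1}$ along $-x$. The other $n-1$ external angles are each strictly positive (every $s_j>0$) and sum to $\pi$, so following edge directions around the boundary shows that each of the $n-2$ edges $\edge_{k+2},\ldots,\edge_{k-1}$ points strictly into the open lower half-plane $y<0$; write its $y$-component as $-c_j\ell_j$ with $c_j>0$. Closure in $y$ then reads $\sum_j c_j\ell_j=0$, the sum over those $n-2$ edges, with all $c_j>0$ and all $\ell_j\ge 0$, forcing $\ell_j=0$ for each of them; closure in $x$ gives $\ell_k=\ell_{k+1}$, and unit perimeter pins $\ell_k=\ell_{k+1}=\tfrac12$. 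So $\ConvEuc^n[\mathbf{s}]$ is this single length vector --- precisely the digon vertex of the width-$1$ substring tie $(s_k)$ against its complement.

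The one place needing genuine care is the sign step just above: one must check that \emph{all} of $\edge_{k+2},\ldots,\edge_{k-1}$ point into a common \emph{open} half-plane. This is exactly where strict positivity of the shangles is used --- if some intermediate external angle could vanish, one would get only a closed half-plane and the cancellation would no longer force those lengths to zero. Everything else is bookkeeping, together with the routine transversality checks needed to confirm that the local pictures above really are the vertex figures: that the free lengths sweep out a whole neighborhood in the relevant cross-section, and that the complementary lengths stay uniquely determined and positive nearby.
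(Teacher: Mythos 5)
Your proposal is correct, and for the first and third bullets and for the simplicity claims it follows the same route as the paper: external angles of a convex polygon cannot exceed $\pi$, the tangential \shatgon rescaled to unit perimeter gives an interior point of the unit-perimeter flat, and the vertex figures are read off as $\Delta_{n-4}$ at a trigon vertex and as $\Delta_{w-2}\times\Delta_{n-w-2}$ at a digon vertex of width $w$, with $w=2$ giving $\Delta_0\times\Delta_{n-4}$ (simple) and $w\ge 3$ giving $n-2$ facets in dimension $n-4$ (nonsimple). The one place you genuinely diverge is the middle bullet. The paper disposes of it by observing that an entry of weight $S/2$ creates a substring tie of width $w=1$, whose digon vertex has the empty vertex figure $\Delta_{-1}\times\Delta_{n-3}=\emptyset$ and is therefore the isolated point constituting the whole polytope. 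You instead give a direct coordinate computation: align the two antiparallel edges with the $x$-axis, note that the remaining $n-2$ edge directions lie strictly in the open lower half-plane because the other external angles are strictly positive and sum to $\pi$, and let closure in $y$ force those $n-2$ lengths to vanish. Your version is more self-contained and makes explicit exactly where strict positivity of the shangles is needed; the paper's version is shorter but leans on the digon-vertex machinery developed for the full-dimensional case. Both are valid.
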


\subsection{The hyperbolic alternative}
\label{sect:HyperbolicPolytopes}

We focus on the Euclidean fixed-angles polytope $\ConvEuc^n[\mathbf{s}]$ in this paper, rather than on the Bavard--Ghys~\cite{BavardGhys} hyperbolic $\ConvHyp^n[\mathbf{s}]$, since the Euclidean polytopes are more elementary.  The detailed geometry of $\ConvHyp^n[\mathbf{s}]$ is prettier than that of $\ConvEuc^n[\mathbf{s}]$, however.  Indeed, each polytope $\ConvHyp^n[\mathbf{s}]$ is a \emph{$k$\_truncated $(n-3)$\_orthoscheme}, for some $k$ with $0\le k\le 2$~\cite{Fillastre}.\footnote{Truncating a vertex of a polytope typically replaces that vertex with a new facet, with a new normal vector.  In a $k$\_truncated orthoscheme, however, $k$ of the hyperplanes that would otherwise define facets are so far away from the polytope that their facets don\t arise.}  Also, there is an elegant formula for the dihedral angle between two facet hyperplanes of $\ConvHyp^n[\mathbf{s}]$ that are not orthogonal, based on the cross ratio of the slopes of four of the $\hat{\mathbf{s}}$\_gon\s edges (proven by Bavard and Ghys~\cite{BavardGhys} and nicely explained by Fillastre~\cite{Fillastre}).

A warning about digon vertices in the hyperbolic case:  We can get a digon with unit area only as a limit, with the two antiparallel edges growing to be infinitely long while all of the other edges shrink to points.  A digon vertex of $\ConvHyp^n[\mathbf{s}]$ is thus an ideal point, located out on the Cayley absolute.  That vertex is either isolated, simple, or nonsimple according as $w=1$, $w=2$, or $w\ge 3$, just as in the Euclidean case; but all digon vertices in the hyperbolic case are ideal.

As one example of the geometric elegance of the hyperbolic option, consider the fixed-angles polytope $\Conv^5[\mathbf{q}]$.  The Euclidean version $\ConvEuc^5[\mathbf{q}]$ in Figure~\ref{fig:ConvComb} is nothing special; but its hyperbolic analog $\ConvHyp^5[\mathbf{q}]$ is a Lambert quadrilateral, with three right angles (aka a \emph{singly truncated $2$\_orthoscheme}~\cite{Fillastre}).  The one angle that isn\t right is at the lowest, orange-to-green vertex, the vertex that would have been cut off by a brown edge, were there such an edge.  

As a second example, we mentioned that $\ConvEuc^6[1,\ldots,1]$, the Euclidean fixed-angles polyhedron for equiangular hexagons, consists of cube-corners on both sides of an equilateral triangle.  In the hyperbolic $\ConvHyp^6[1,\ldots,1]$, an untruncated $3$\_orthoscheme, that triangle is triply ideal; so each of the six triangular faces of $\ConvHyp^6[1,\ldots,1]$ is orthogonal to the three that it meets along an edge and is tangent at infinity to the other two, each of which~it touches just at a vertex.\footnote{Thurston~\cite[p. 515]{Thurston} discusses this hyperbolic polyhedron in a related hexagonal context.}

\section{Varying the shangles}
\label{sect:BreakingTies}

Consider a shangle necklace $\mathbf{s}$ that has a substring tie, say of width~$w$.  We can perturb $\mathbf{s}$ to eliminate that tie in either of two directions.  How do those perturbations affect the structure of the fixed-angles polytope $\ConvEuc^n[\mathbf{s}]$?

A substring tie of width $w$ in $\mathbf{s}$ gives each \shatgon a pair of antiparallel edges that are connected, say, at their left end, by a chain of $w-1$ other edges and, at their right end, by a chain of $n-w-1$.  If all of the edges in either chain shrink to points, then all of the edges in both chains must so shrink, the two paired edges must have length $\frac{1}{2}$, and we are at the associated digon vertex.  

Suppose that we now perturb $\mathbf{s}$ by slightly increasing the shangles in the tied substring of length $w$.  The total turn on the left from one of the paired edges to the other will then exceed $\pi$, so it will no longer be possible for all of the edges in the left chain to shrink to points simultaneously.  That will still be possible, though, for the $n-w-1$ edges in the right chain.  When those edges do all shrink to points simultaneously, we find ourselves on some face, say $F\mkern -2mu$, of the perturbed polytope.  The face $F$ has dimension $(n-3)-(n-w-1)=w-2$; but what is its combinatorial structure?  

When all of the edges of the right chain have length $0$, we can imagine the two paired edges as directly joined, on the right, by a supervertex whose external angle is a skosh less than $\pi$.  For perturbations that are small enough, however, adding to that external angle the fixed external angle at the other end of either of the paired edges will give a sum that exceeds $\pi$; so neither of the paired edges can shrink to points.  It follows that the face $F\mkern -2mu$, viewed in its own right as the fixed-angles polytope of this simplified $(w+1)$\_gon, has only $w-1$ facets.  So the face $F$ is combinatorially a $\Delta_{w-2}$.  

In a similar way, if we perturb $\mathbf{s}$ by slightly increasing the shangles in the substring of length $n-w$, the edges in the right chain can no longer shrink to points simultaneously; and having those in the left chain shrink simultaneously lands us on a face of the perturbed polytope that is combinatorially a $\Delta_{n-w-2}$.  Thus, perturbing $\mathbf{s}$ so as to break the tie causes the digon vertex to expand into either a $\Delta_{w-2}$ or a $\Delta_{n-w-2}$.  

If $w=n/2$, the two directions in which we can perturb are symmetric.  If $w<n/2$, the tie already implies a certain imbalance in $\mathbf{s}$.  Perturbing so as to exacerbate that imbalance expands the digon vertex into a $\Delta_{w-2}$, while perturbing to ameliorate gives us a $\Delta_{n-w-2}$.  Shangle necklaces that are better balanced thus generate fixed-angles polytopes that are numerically more complicated.

\begin{figure}\small
    \begin{center}
		\includegraphics[scale=0.95]{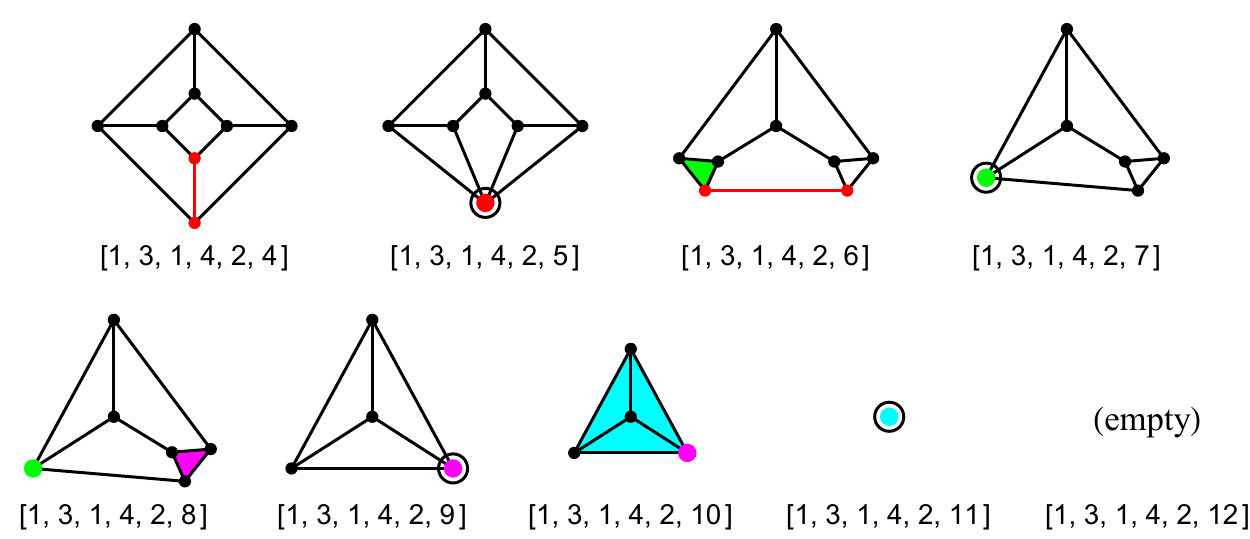}
	\end{center}
	\vspace*{-5 pt}
	\caption{The combinatorial structures of the fixed-angles polyhedra $\ConvEuc^6[1,3,1,4,2,\sigma]$ for integral $\sigma$ from~$4$ to $12$.  The four digon vertices are circled.  At $\sigma=5$, a red face of dimension $1$ becomes a new face of dimension $1$; at $\sigma=7$ and at $\sigma=9$, green and magenta faces of dimension $2$ become dimension $0$; and at $\sigma=11$, the cyan face of dimension $3$ becomes dimension $-1$.}
	\label{fig:Polyhedra}
\end{figure}

For some examples of this, Figure~\ref{fig:Polyhedra} depicts how the combinatorial structure of the polyhedron $\ConvEuc^6[1,3,1,4,2,\sigma]$ changes as $\sigma$ increases from $4$ to $12$, mostly simplifying as the necklace becomes more imbalanced:
\begin{enumerate}
\item[4:]  The initial $\ConvEuc^6[1,3,1,4,2,4]$ has the combinatorial structure of a cube.  
\item[5:] When $\sigma=5$, the substring tie of width $w=3$ between $(3,1,4)$ and $(2,5,1)$ produces a digon vertex where four face-planes concur.  As $\sigma$ passes through~$5$, the red edge of the cube shrinks to that digon vertex and reemerges as a new edge, but connected differently.  A $1$\_face reemerges as a $1$\_face because $n-w-2$ and $w-2$ are both $1$; so the combinatorial structures before and after, while different, are numerically equally complicated.
\item[6:] The polyhedron $\ConvEuc^6[1,3,1,4,2,6]$ can be thought of as a tetrahedron with two of its four vertices truncated.  This polyhedron is also $C_3(6)^\Delta$, the dual of the convex hull of six points along a twisted cubic.
\item[7:] When $\sigma=7$, the substring tie of width $2$ between $(1,3,1,4)$ and $(2,7)$ produces a digon vertex that is simple.  As $\sigma$ passes through~$7$, the green triangular face shrinks to that vertex and then reemerges as a trigon vertex; so a $2$\_face becomes a $0$\_face.
\item[8:] The polyhedron $\ConvEuc^6[1,3,1,4,2,8]$ is a tetrahedron with one vertex truncated.
\item[9:] When $\sigma=9$, the substring tie of width $2$ between $(3,1,4,2)$ and $(9,1)$ produces another simple digon vertex.  As $\sigma$ passes through~$9$, the magenta triangular face of $C_3(6)^\Delta$ shrinks to that digon vertex and emerges as a trigon vertex.
\item[10:] The polyhedron $\ConvEuc^6[1,3,1,4,2,10]$ is a tetrahedron.
\item[11:] When $\sigma=11$, the substring tie of width $1$ between $(1,3,1,4,2)$ and $(11)$ produces a digon vertex that is an isolated point.  As $\sigma$ passes through~$11$, the entire tetrahedron shrinks to that vertex and disappears, a $3$\_face becoming a $(-1)$\_face.
\item[12:] The polyhedron $\ConvEuc^6[1,3,1,4,2,12]$ is empty.
\end{enumerate}

\section{Dual cyclic when {\large $\protect\nlow$} is odd}
\label{sect:nOdd}

The fixed-angles polytopes $\ConvEuc^n[\mathbf{s}]$ are combinatorially richer when the entries of the necklace $\mathbf{s}$ are well balanced.  When $n$ is odd, the richest of all are the duals of cyclic polytopes.

\subsection{Majority dominance} 
Given a shangle necklace $\mathbf{s}$ of length $n$, we call a substring of~$\mathbf{s}$ \emph{short}, \emph{diametral}, or \emph{long} according as its length is less than, equal to, or greater than $n/2$.  We call a shangle necklace \emph{majority dominant} when all of its short substrings are light or, equivalently, when all of its long substrings are heavy.  Majority dominance doesn\t say anything about the weights of the diametral substrings, which arise only when $n$ is even.  Any necklace of the form $[1\pm\epsilon,1,\ldots,1]$ for $0\le\epsilon<1$ is majority dominant; but the entries of a majority-dominant necklace can also vary widely, as in the length\_$7$ necklace $[1,1,M,1,1,1,M]$ for large $M$.  

For $n$ of either parity, majority dominance has a consequence worth noting:

\begin{lemma}
\label{lem:Neighborly}
When the necklace $\mathbf{s}$ is majority dominant, any set of at most $(n-3)/2$ of the facets of the fixed-angles polytope $\ConvEuc^n[\mathbf{s}]$ always intersect in a nonempty face.  Stating that more concisely, the $(n-3)$\_dimensional polytope $\ConvEuc^n[\mathbf{s}]^\Delta$ is neighborly.
\end{lemma}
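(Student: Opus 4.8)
The plan is to recognize each intersection of facets as a smaller fixed-angles polytope and to read off its dimension from the Proposition. For a set $A\subseteq\{1,\ldots,n\}$ of edge-indices, let $\bigcap_{k\in A}F_k$ be the corresponding intersection of facets of $\ConvEuc^n[\mathbf{s}]$, where $F_k$ is the facet on which $\edge_k$ has length $0$. Forcing $\ell_k=0$ for every $k\in A$ coalesces the vertices of each \shatgon into consecutive runs --- one run being a maximal block of $A$ together with the two vertices flanking it, plus one singleton run for each surviving vertex. Letting $\mathbf{s}'$ be the length-$(n-|A|)$ necklace of combined shangles of those runs and iterating the single-edge description from Section~\ref{sect:EuclideanPolytopes}, we get $\bigcap_{k\in A}F_k=\ConvEuc^{n-|A|}[\mathbf{s}']$ canonically. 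The arithmetic point to keep straight is that a maximal block of $r$ consecutive indices of $A$ contributes the entry of $\mathbf{s}'$ that equals the weight of a length-$(r+1)$ substring of $\mathbf{s}$, while each surviving vertex contributes the weight of a length-$1$ substring.

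Now suppose $|A|\le (n-3)/2$. Every maximal block of $A$ then has at most $|A|\le (n-3)/2$ indices, so every entry of $\mathbf{s}'$ is the weight of a substring of $\mathbf{s}$ of length at most $|A|+1\le (n-1)/2<n/2$ --- a \emph{short} substring. Majority dominance makes every short substring light, so $\max(\mathbf{s}')<S/2$, and the third bullet of the Proposition then gives that $\ConvEuc^{n-|A|}[\mathbf{s}']$ has the full dimension $(n-|A|)-3$. In particular it is nonempty, which is the first assertion of the lemma. For the clause that $\ConvEuc^n[\mathbf{s}]^\Delta$ is neighborly, note that the same computation shows $\bigcap_{k\in A}F_k$ has codimension exactly $|A|$ in $\ConvEuc^n[\mathbf{s}]$: a relative-interior point of this face --- realized, say, by a tangential polygon with shangle necklace $\mathbf{s}'$, all of whose edges are positive --- has every edge $\edge_{k_0}$ with $k_0\notin A$ of positive length, so the face lies on no facet $F_{k_0}$ with $k_0\notin A$. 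Dualizing the face lattice, this says precisely that the $|A|$ vertices of $\ConvEuc^n[\mathbf{s}]^\Delta$ dual to the facets in $A$ are exactly the vertices of a face of $\ConvEuc^n[\mathbf{s}]^\Delta$; as $A$ ranges over all sets of at most $(n-3)/2$ facets, this is the neighborliness of $\ConvEuc^n[\mathbf{s}]^\Delta$. The degenerate case $|A|=0$, which is all that $n\le 4$ permits, is merely the assertion that $\ConvEuc^n[\mathbf{s}]$ is nonempty of full dimension, which follows the same way since majority dominance forces $\max(s_1,\ldots,s_n)<S/2$.

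The main obstacle is bookkeeping rather than depth: pinning down the run/block correspondence so that an $r$-edge block of $A$ yields a length-$(r+1)$ substring of $\mathbf{s}$ (the off-by-one between edges and vertices), and then noticing that this same count shows the intersection is not merely nonempty but full-dimensional as the fixed-angles polytope of $\mathbf{s}'$ --- which is what rules out any further incident facet and so upgrades ``the chosen facets meet'' to ``their dual vertices span a genuine face of $\ConvEuc^n[\mathbf{s}]^\Delta$''.
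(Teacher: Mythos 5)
Your proof is correct, and at its core it uses the same counting as the paper: a maximal block of $r$ shrinking edges merges $r+1$ vertices into a supervertex whose shangle is the weight of a length-$(r+1)$ substring, and $r+1\le (n-1)/2<n/2$ keeps that substring short, hence light by majority dominance. The paper's own proof is a four-sentence version of exactly this bound (it phrases it as: a heavy supervertex needs at least $n/2$ merged vertices and hence at least $(n-2)/2$ shrunken edges, whereas only $(n-3)/2$ are shrinking). Where you genuinely go beyond the paper is in the dualization. The paper stops at ``the facets have nonempty intersection'' and calls that neighborliness of the dual; but those two statements are not equivalent for general polytopes --- in a pyramid over an octahedron, every two facets meet, yet two pyramidal facets over opposite triangles meet only at the apex, which lies on six further facets, so the dual is not $2$\_neighborly. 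What is actually needed is that the facets containing $\bigcap_{k\in A}F_k$ are \emph{exactly} those indexed by $A$, and you supply this by identifying the intersection with $\ConvEuc^{n-\lvert A\rvert}[\mathbf{s}']$, invoking the Proposition to get full dimension, and exhibiting a tangential polygon in its relative interior with all remaining edges positive. So your route --- reducing to the classification Proposition rather than arguing directly --- costs a little bookkeeping but buys a complete justification of the second sentence of the lemma, which the paper's proof leaves implicit.
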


\begin{proof}
A facet of $\ConvEuc^n[\mathbf{s}]$ is the subset where some edge has shrunk to a point.  The only thing that can prevent some set of edges from shrinking to points simultaneously is when that set includes a string of consecutive edges whose simultaneous shrinking would produce a supervertex of shangle more than $S/2$.  When the necklace is majority dominant, at least $n/2$ vertices would have to merge to produce such a supervertex, and that would require the shrinking of at least $(n-2)/2$ edges.  But we are discussing here what happens when the number that shrink is at most $(n-3)/2$.
\end{proof}

We turn next to the consequences of majority dominance when $n$ is odd.  

\subsection{Tours, small steps, and odd steps}  

Let $n\ge 3$ be odd.  We define a \emph{tour} to be a length-$3$ necklace of distinct elements of $\ints/n\ints$.  Given some tour $[p,q,r]$, suppose that we travel around $\ints/n\ints$ in the increasing direction by stepping from $p$ to $q$, on to $r$, and then around to $p$ again.  We classify a tour as \emph{once-around} or \emph{twice-around}, according as those steps take us once or twice around $\ints/n\ints$.  In a twice-around tour, each step from one element to another passes over the third on the way. 

There are lots of ways to write down any tour as an ordered triple of integers $(i,j,k)$.  We can replace $i$ by $i+kn$ for any $k$ without changing the corresponding element of $\ints/n\ints$; and we can choose any one of the three cyclic shifts $(i,j,k)$, $(j,k,i)$, or $(k,i,j)$.  Among all of those alternatives, each tour has a unique \emph{normal form} $(i,j,k)$, in which $i$, $j$, and $k$ all lie in $[1\dd n]$ and in which $i$ is the smallest of the three.  The normal form $(i,j,k)$ of a once-around tour has $1\le i<j<k\le n$, while that of a twice-around tour has $1\le i<k<j\le n$.

Let $m\coloneq (n-1)/2$, so that $n=2m+1$ with $m\ge 1$.  We define four sets of integers, each of cardinality $m$:
\begin{align*}
S&\coloneq \{1,2,\ldots,m\}\\
L&\coloneq \{m+1,m+2,\ldots,2m\},\\
O&\coloneq \{1,3,5,\ldots,n-2\},\text{ and}\\
E&\coloneq \{2,4,6,\ldots,n-1\}.
\end{align*}
Any nonzero element of $\ints/n\ints$ is congruent, modulo $n$, either to an element of~$S$ or to an element of~$L$, so we call it either \emph{small} or \emph{large}.  It is also congruent either to an element of~$O$ or of~$E$, so we call it \emph{odd} or \emph{even}.  

It is easy to see that $-S=L$, $-O=E$, $2S=E$, and $2L=O$.  Since the multiplicative inverse of $2$ in the ring $\ints/n\ints$ is $-m=(1-n)/2$, we also have $(-m)E=S$ and $(-m)O=L$, which implies $mE=L$ and $mO=S$.

The three \emph{steps} of a tour $[p,q,r]$ are the three nonzero differences $q-p$, $r-q$, and $p-r$.  We call a tour \emph{small} when its three steps are all small elements of $\ints/n\ints$; and we call it \emph{odd} when its three steps are all odd.  Since three elements of $S$ always sum to less than $2n$, every small tour goes around $\ints/n\ints$ once.  Three elements of $O$ can\t sum to $2n$ either, since their sum is odd; so every odd tour also goes around $\ints/n\ints$ once.

\begin{lemma}
\label{lem:TwiceSmallIsOdd}
The map $[p,q,r]\mapsto [-2p,-2q,-2r]$ is a bijection from the set of small tours to the set of odd tours.  Its inverse is the map $[p,q,r]\mapsto [mp,mq,mr]$.
\end{lemma}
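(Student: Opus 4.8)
The plan is to recognize both maps as the same simple operation: multiply every entry of a tour by a fixed unit of $\ints/n\ints$. Since $n$ is odd, $2$ is a unit, hence so is $-2$; and since $2m=n-1\equiv -1\pmod n$, the element $m$ is a unit too, with $(-2)\cdot m\equiv 1\pmod n$. For any unit $u$, the assignment $[p,q,r]\mapsto[up,uq,ur]$ is well defined on tours, because multiplication by $u$ is a bijection of $\ints/n\ints$ (so it preserves distinctness of the three entries) and it commutes with cyclic rotation (so it descends from ordered triples to necklaces). Taking $u=-2$ and $u=m$ thus gives two bijections of the set of all tours onto itself, and because $(-2)\cdot m\equiv 1$ these bijections are mutually inverse.

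It then remains only to check that scaling by $-2$ carries small tours to odd tours and that scaling by $m$ carries odd tours to small tours. Granting those two facts, two mutually inverse self-bijections of the set of all tours, each of which respects the relevant subset, must restrict to mutually inverse bijections between the small tours and the odd tours, which is exactly the claim. Now, whether a tour is small or odd depends only on its triple of steps, and scaling by $u$ merely scales each step by $u$: the steps of $[up,uq,ur]$ are $u(q-p)$, $u(r-q)$, and $u(p-r)$. So it suffices to show that multiplication by $-2$ sends every small element of $\ints/n\ints$ to an odd one, and that multiplication by $m$ sends every odd element to a small one.

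Both of these follow at once from the identities recorded just before the lemma. A small element is congruent mod $n$ to a member of $S$, and $-2S=-(2S)=-E=O$ (using $2S=E$ together with $-E=O$, the latter being $-O=E$ negated); so its image under multiplication by $-2$ lies in $O$, i.e.\ is odd. An odd element is congruent mod $n$ to a member of $O$, and $mO=S$ directly; so its image under multiplication by $m$ is small. Since these maps are injective and $|S|=|O|=m$, multiplication by $-2$ in fact restricts to a bijection $S\to O$ and multiplication by $m$ to the inverse bijection $O\to S$, but the argument does not require this sharpening.

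There is no substantive obstacle here: the computations are purely mechanical. The only point that takes a moment of care is the bookkeeping in the second paragraph --- that a tour's type is a function of its multiset of steps and transforms covariantly under entrywise scaling --- after which the displayed identities $-O=E$, $2S=E$, and $mO=S$ do all of the work.
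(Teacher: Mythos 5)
Your proposal is correct and follows essentially the same route as the paper's proof: both reduce the claim to the facts that the steps of a tour transform covariantly under entrywise multiplication by a unit, and that $-2$ carries $S$ to $O$ while its inverse $m$ carries $O$ back to $S$, using the identities recorded before the lemma. You simply spell out the well-definedness and mutual-inverse bookkeeping that the paper leaves implicit.
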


\begin{proof}
Multiplication by $-2$ takes $S$ to $O$, while its inverse, which is multiplication by~$m$, takes $O$ back to $S$.  And both of those operations distribute over the subtraction that computes a difference.
\end{proof}

It follows that there just as many small tours as odd tours.  You might enjoy verifying that there are $\frac{1}{4}\binom{n+1}{3}$ of each.  That expression also counts, in a regular $n$\_gon, the number of inscribed triangles that contain the $n$\_gon\s center, since the two orientations of any inscribed triangle give us two tours, and that triangle contains the center just when one of those tours is small.

\subsection{Showing that two face lattices are dual}

\begin{lemma}
\label{lem:FixedAngles}
When $n\ge 3$ is odd and the shangle necklace $\mathbf{s}$ is majority dominant, three edges $\edge_i$, $\edge_j$, and $\edge_k$ of an \shatgon with $1\le i<j<k\le n$ can be extended to form a trigon just when the triple $(i,j,k)$ is the normal form of a small tour.
\end{lemma}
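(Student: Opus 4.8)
The plan is to translate the geometric condition ``can be extended to form a trigon'' into a statement purely about the lengths of three substrings of $\mathbf{s}$, and then to recognize that statement as the definition of a small tour. Recall from Section~\ref{sect:EuclideanPolytopes} that three edges of an \shatgon extend to a triangular \shatgon, with the remaining $n-3$ edges shrunk to points, exactly when those three edges are the gaps of a partition of the shangle necklace $\mathbf{s}$ into three \emph{light} substrings. So the first step is the bookkeeping observation that the edges $\edge_i$, $\edge_j$, $\edge_k$ with $1\le i<j<k\le n$ cut $\mathbf{s}$ into the three substrings $(s_i,\ldots,s_{j-1})$, $(s_j,\ldots,s_{k-1})$, and $(s_k,\ldots,s_n,s_1,\ldots,s_{i-1})$, whose lengths are $j-i$, $k-j$, and $n-(k-i)$ --- and that these three lengths are precisely the three steps $q-p$, $r-q$, $p-r$ of the tour $[p,q,r]=[i,j,k]$, read in $\ints/n\ints$. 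These lengths sum to $n$, so at most one of them can exceed $n/2$; and since $1\le i<j<k\le n$ with $i$ smallest, $(i,j,k)$ is automatically the normal form of the (necessarily once-around) tour $[i,j,k]$.

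The second step is the bridge supplied by the hypotheses. Because $n$ is odd, no substring of $\mathbf{s}$ is diametral, so every substring is either short or long; and majority dominance says precisely that short substrings are light and long substrings are heavy. Hence, for our three substrings, ``all three light'' is equivalent to ``all three short'', i.e.\ to all three of $j-i$, $k-j$, $n-(k-i)$ lying in $\{1,\ldots,m\}$, which is exactly the condition that the three steps of $[i,j,k]$ are small elements of $\ints/n\ints$ --- that is, that $[i,j,k]$ is a small tour.

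Both directions then drop out. If $(i,j,k)$ is the normal form of a small tour, the three substrings it cuts are short, hence light by majority dominance, so the three edges extend to a trigon. Conversely, if the three edges extend to a trigon, the three substrings are light; none can be long, since a long substring would be heavy, contradicting majority dominance; so all three are short, the three steps of $[i,j,k]$ are small, and $(i,j,k)$ is the normal form of a small tour. The only real work here is the translation dictionary --- matching ``gap edges of a light partition'' with ``steps of a tour'' and checking that the range $1\le i<j<k\le n$ is exactly the normal-form range for once-around tours --- so I expect no analytic or combinatorial obstacle beyond getting that bookkeeping right.
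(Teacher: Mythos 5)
Your proposal is correct and follows essentially the same route as the paper's proof: identify the three substrings cut out by $\edge_i$, $\edge_j$, $\edge_k$, note their lengths $j-i$, $k-j$, $(n+i)-k$ are the steps of the tour $[i,j,k]$, and use majority dominance (plus oddness of $n$ to rule out diametral substrings) to convert ``all light'' into ``all short.'' The paper states this more tersely, but your added bookkeeping --- checking both directions and that $1\le i<j<k\le n$ is the normal-form range for once-around tours --- is exactly the implicit content of its argument.
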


\begin{proof}
The three substrings into which the shangle necklace $\mathbf{s}$ is cut by the edges $\edge_i$, $\edge_j$, and $\edge_k$ are 
\[(s_i,\ldots,s_{j-1}),\quad (s_j,\ldots,s_{k-1}), \quad\text{and}\quad (s_k,\ldots,s_n,s_1,\ldots,s_{i-1}),\]
 of lengths $j-i$, $k-j$, and $(n+i)-k$.  Those edges can be extended to form a trigon just when those three substrings are all light.  When the necklace $\mathbf{s}$ is majority dominant, that happens just when those substrings are all short, that is, when their lengths are less than $n/2$.  And that happens just when the triple $(i,j,k)$ is the normal form of a small tour.
\end{proof}

\begin{lemma}
\label{lem:Cyclic}
When $n\ge 5$ is odd, three vertices $v_i$, $v_j$, and $v_k$ of the cyclic polytope $C_{n-3}(n)$ with $1\le i<j<k\le n$ lie on the same side of the hyperplane determined by the other $n-3$ vertices just when the triple $(i,j,k)$ is the normal form of an odd tour.
\end{lemma}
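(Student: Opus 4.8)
The plan is to realize the cyclic polytope concretely and to exploit the fact that an affine functional, restricted to the moment curve, is just a polynomial in the parameter. Take $v_\ell = (\ell, \ell^2, \ldots, \ell^{n-3})$ for $\ell = 1, \ldots, n$, so that $C_{n-3}(n)$ is their convex hull; the combinatorics does not depend on this choice. The $n-3$ vertices $v_\ell$ with $\ell \notin \{i,j,k\}$ are affinely independent, since the $(n-3) \times (n-3)$ Vandermonde determinant built from the values $\ell^0, \ldots, \ell^{n-4}$ over those indices is nonzero; hence they span a genuine hyperplane $H$, the zero set of a nonzero affine functional $f$ on $\reals^{n-3}$, determined up to a nonzero scalar.

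First I would restrict $f$ to the moment curve. The function $g(t) \coloneq f(t, t^2, \ldots, t^{n-3})$ is a polynomial in $t$ of degree at least $1$ and at most $n-3$ that vanishes at the $n-3$ distinct points $t = \ell$ for $\ell \notin \{i,j,k\}$; therefore $g(t) = c \prod_{\ell \notin \{i,j,k\}} (t - \ell)$ for some constant $c \neq 0$. In particular $g$ does not vanish at $t = i$, $t = j$, or $t = k$, so $v_i, v_j, v_k$ lie strictly off $H$, and two of them lie on the same side of $H$ exactly when the corresponding values of $g$ have the same sign.

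Next comes the parity count. Among the factors $(i - \ell)$ of $g(i)$, those with $\ell > i$ are negative, and there are $(n-i) - 2$ of them (subtracting $2$ because $j$ and $k$ both exceed $i$); likewise $g(j)$ has $(n-j) - 1$ negative factors and $g(k)$ has $(n-k) - 0$. So, writing $\sigma$ for the sign of $c$, the signs of $g(i)$, $g(j)$, $g(k)$ are $\sigma(-1)^{n-i}$, $\sigma(-1)^{n-j-1}$, and $\sigma(-1)^{n-k}$. Hence $v_i, v_j, v_k$ lie on a common side of $H$ if and only if $(-1)^{n-i} = (-1)^{n-j-1} = (-1)^{n-k}$, which reduces to the two parity conditions ``$j - i$ is odd'' and ``$k - j$ is odd'' (the remaining comparison, of $(-1)^{n-i}$ with $(-1)^{n-k}$, then follows automatically). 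Finally I would match this against the definition of an odd tour: since $1 \le i < j < k \le n$, the necklace $[i,j,k]$ is a once-around tour already in normal form, and its three steps, as residues in $\{1, \ldots, n-1\}$, are $j - i$, $k - j$, and $n + i - k$. Because $n$ is odd, $j - i$ and $k - j$ being odd forces $k - i$ to be even and hence $n + i - k$ to be odd; so all three steps are odd precisely when the first two are, and combining this with the previous paragraph gives exactly the claim.

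I do not expect a genuine obstacle here; the only delicate points are verifying that the chosen $n-3$ vertices really span a hyperplane (so that $f$ exists at all) and keeping the bookkeeping of negative factors of $g$ straight across the three excluded indices $i$, $j$, $k$, where a sign slip is easy. The collapse from three sign conditions to two --- and its harmony with the three-odd-steps definition of an odd tour --- serves as a convenient consistency check.
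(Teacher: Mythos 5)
Your proof is correct and follows essentially the same route as the paper's: both reduce the question to the parity of the number of excluded vertices that the moment curve passes between $v_i$, $v_j$, and $v_k$, which is exactly Gale's evenness condition. The only difference is one of packaging --- the paper cites Gale's condition and counts crossings, while you derive it from scratch by factoring the affine functional restricted to the moment curve as $c\prod_{\ell\notin\{i,j,k\}}(t-\ell)$ and reading off signs --- and your sign bookkeeping and the final reduction to the odd-tour condition both check out.
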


\begin{proof}
This is essentially Gale\s evenness condition~\cite[pp.~14--15]{Ziegler}.  

The moment curve passes through the vertices $v_1$ through $v_n$ of the cyclic polytope $C_{n-3}(n)$ in that order.  The $n-3$ vertices other than $v_i$, $v_j$, and $v_k$ determine a hyperplane, which the moment curve intersects at those $n-3$ vertices.  The vertices $v_i$, $v_j$, and $v_k$ lie on the same side of that hyperplane just when the moment curve intersects it an even number of times during each passage from one of those three to another.

In passing from $v_i$ to $v_j$, the moment curve intersects the hyperplane $j-i-1$ times, at $v_{i+1}$ through $v_{j-1}$.  In passing from $v_j$ to $v_k$, that number is $k-j-1$.  By Gale\s evenness condition, the vertices $v_i$, $v_j$, and $v_k$ lie on the same side of that hyperplane just when $j-i-1$ and $k-j-1$ are even, that is, when $j-i$ and $k-j$ are odd.  But whenever those two are both odd, $(n+i)-k$ must be odd as well.  So $v_i$, $v_j$, and $v_k$ lie on the same side of the hyperplane just when the triple $(i,j,k)$ is the normal form of an odd tour.
\end{proof}

\begin{prop}
\label{prop:EquiOdd}
When $n\ge 5$ is odd and the necklace $\mathbf{s}$ is majority dominant, the face lattice of the fixed-angles polytope $\ConvEuc^n[\mathbf{s}]$ is dual to that of the cyclic polytope $C_{n-3}(n)$.
\end{prop}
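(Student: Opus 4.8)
The plan is to chain together the three lemmas just proved. First I would observe that majority dominance together with $n$ being odd forces the necklace $\mathbf{s}$ to be free of substring ties: a tie has width $w\le n/2$, and since $n$ is odd this means $w\le(n-1)/2<n/2$, so the shorter of the two tied substrings is a \emph{short} substring of weight exactly $S/2$, contradicting the requirement that every short substring be light. Being tie-free, $\ConvEuc^n[\mathbf{s}]$ is a simple polytope, all of whose vertices are trigon vertices. Moreover each single shangle is a short substring, hence light, so $\max_k s_k<S/2$ and (by the classification proposition above) $\ConvEuc^n[\mathbf{s}]$ has the full dimension $n-3$; and each length-$2$ substring $(s_{k-1},s_k)$ is short (here $n\ge 5$), hence light, so every edge $\edge_k$ can shrink to a point. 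Thus $\ConvEuc^n[\mathbf{s}]$ has precisely the $n$ facets indexed by $k\in\{1,\dots,n\}$, with a trigon vertex lying on facet $\ell$ exactly when $\ell$ is not one of its three long edges.

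Next I would pass to the dual and describe both relevant simplicial complexes on the common vertex set $\{1,\dots,n\}$. Since $\ConvEuc^n[\mathbf{s}]$ is simple, $\ConvEuc^n[\mathbf{s}]^\Delta$ is a simplicial polytope whose vertices are labelled by $\{1,\dots,n\}$ and whose face lattice is determined by its boundary complex. By Lemma~\ref{lem:FixedAngles}, the vertices of $\ConvEuc^n[\mathbf{s}]$ are exactly the triples $\{i,j,k\}$ whose increasing listing $(i,j,k)$ is the normal form of a small tour; dualizing and using the incidence above, the facets of $\ConvEuc^n[\mathbf{s}]^\Delta$ are precisely the complementary sets $\{1,\dots,n\}\setminus\{i,j,k\}$ as $(i,j,k)$ runs over normal forms of small tours. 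On the other side, Lemma~\ref{lem:Cyclic} (Gale's evenness condition) says that the facets of $C_{n-3}(n)$, on its vertex set $\{1,\dots,n\}$, are exactly the complements $\{1,\dots,n\}\setminus\{i,j,k\}$ as $(i,j,k)$ runs over normal forms of \emph{odd} tours.

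Finally I would invoke Lemma~\ref{lem:TwiceSmallIsOdd}. Identifying the labels $\{1,\dots,n\}$ with $\ints/n\ints$, multiplication by $-2$ is a bijection of $\ints/n\ints$ because $n$ is odd, hence a permutation $\psi$ of $\{1,\dots,n\}$, and by Lemma~\ref{lem:TwiceSmallIsOdd} it carries the underlying three-element set of each small tour onto the underlying set of an odd tour, bijectively. Since $\psi$ permutes the whole vertex set it commutes with complementation, so it sends each facet of $\ConvEuc^n[\mathbf{s}]^\Delta$ to a facet of $C_{n-3}(n)$ and conversely; being a label bijection that matches up the two families of facets, it induces an isomorphism of boundary complexes and hence of face lattices. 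Therefore $\ConvEuc^n[\mathbf{s}]^\Delta\cong C_{n-3}(n)$, which is exactly the asserted duality.

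I expect the only thing requiring care — a small point rather than a genuine obstacle — to be the passage between ``the triple $(i,j,k)$ with $i<j<k$ is the normal form of a small (resp.\ odd) tour'' and ``the three-element set $\{i,j,k\}$ underlies a small (resp.\ odd) tour'': both small tours and odd tours go once around $\ints/n\ints$, so a given three-element subset underlies at most one small tour and at most one odd tour, and which one (if any) is detected by the subset alone, which is what lets the argument above speak about the \emph{sets} $\{i,j,k\}$ rather than ordered triples. Everything else is a direct chaining of the three cited lemmas with the simple-polytope facts recalled in the first step; no new geometry of the fixed-angles polytope or of the moment curve is needed.
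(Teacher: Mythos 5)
Your proposal is correct and follows essentially the same route as the paper: the same relabelling $i\mapsto -2i \pmod n$, chained through Lemmas~\ref{lem:FixedAngles}, \ref{lem:Cyclic}, and~\ref{lem:TwiceSmallIsOdd}. You are somewhat more explicit than the paper about the preliminary bookkeeping (that majority dominance with $n$ odd rules out substring ties, so the polytope is simple with exactly $n$ facets and only trigon vertices, and that each $3$\_subset underlies at most one once-around tour), but the substance is identical.
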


\begin{proof}
It suffices to establish a one-to-one correspondence between the $n$ facets of $\ConvEuc^n[\mathbf{s}]$ and the $n$ vertices of $C_{n-3}(n)$ with the property that $n-3$ facets of $\ConvEuc^n[\mathbf{s}]$ intersect at a vertex precisely when the corresponding $n-3$ vertices of $C_{n-3}(n)$ lie on a common facet.  We establish such a correspondence by associating the facet of $\ConvEuc^n[\mathbf{s}]$ on which the edge $\edge_i$ shrinks to have length $0$ with the vertex $v_{i'}$ of the cyclic polytope $C_{n-3}(n)$ just when $i'\equiv -2i \pmod{n}$. 

By Lemma~\ref{lem:FixedAngles}, the $n-3$ facets of $\ConvEuc^n[\mathbf{s}]$ other than those associated with the edges $\edge_i$, $\edge_j$, and $\edge_k$ intersect at a trigon vertex just when the triple $(i,j,k)$ is the normal form of a small tour.  By Lemma~\ref{lem:Cyclic}, the $n-3$ vertices of $C_{n-3}(n)$ other than $v_{i'}$, $v_{j'}$, and $v_{k'}$ lie on a common facet just when the triple $(i',j',k')$ is the normal form of an odd tour.  Finally, by Lemma~\ref{lem:TwiceSmallIsOdd}, the tour $[i,j,k]$ is small just when the tour $[-2i,-2j,-2k]$ is odd.
\end{proof}

\section{Dual cyclic when {\large $\protect\nlow$} is even}
\label{sect:nEven}

Suppose now that $n$ is even.  We continue to require that the necklace $\mathbf{s}$ be majority dominant, so the polytope $\ConvEuc^n[\mathbf{s}]^\Delta$ is neighborly by Lemma~\ref{lem:Neighborly}.  Majority dominance forbids substring ties of width $w<n/2$, but allows ties of width $w=n/2$.  If we further require that~$\mathbf{s}$ be entirely free of substring ties, then the polytope $\ConvEuc^n[\mathbf{s}]$ will be simple, so its dual $\ConvEuc^n[\mathbf{s}]^\Delta$ will be simplicial, as well as neighborly --- and hence max-faced, by the upper-bound theorem~\cite[p.~254]{Ziegler}.

When $n$ is even, what further condition can we impose on $\mathbf{s}$ to guarantee that $\ConvEuc^n[\mathbf{s}]^\Delta$, beyond being max-faced, is actually the cyclic polytope $C_{n-3}(n)$?

\subsection{Dipole tie-breaking} 
As long as none of the $n$ diametral substrings are tied, which $n/2$ of them are light and which heavy doesn\t affect how many faces of each dimension the fixed-angles polytope has, but it does affect how those faces connect up.  In the necklace $[1,3,1,4,2,4]$ that starts Figure~\ref{fig:Polyhedra}, for example, the heavy diametral substrings are $(3,1,4)$, $(4,2,4)$, and $(4,1,3)$, distributed symmetrically around the necklace; and the resulting polytope $\ConvEuc^6[1,3,1,4,2,4]$ is combinatorially a cube.  In the later necklace $[1,3,1,4,2,6]$, however, the heavy triples are the three that contain~$6$, distributed quite asymmetrically; and the resulting polytope $\ConvEuc^6[1,3,1,4,2,6]$ is $C_3(6)^\Delta$, the dual of a cyclic polytope.  That maximal asymmetry turns out to be the key.

We say that $\mathbf{s}$ has \emph{dipole tie-breaking} when it has a pair of antipodal entries, its \emph{light pole} and \emph{heavy pole}, with the property that the $n/2$ diametral substrings that contain the light pole are all light and the $n/2$ that contain the heavy pole are all heavy.  For example, the necklace $[1,2,1,4,3,4]$ has dipole tie-breaking with light pole $2$ and heavy pole $3$.  This example shows that the light pole need not be a least entry and the heavy pole need not be a greatest.  The heavy pole must exceed the light pole, however.

Dipole tie-breaking is the extra requirement that we impose on the necklace~$\mathbf{s}$ when~$n$ is even so that the fixed-angles polytope $\ConvEuc^n[\mathbf{s}]$ will be dual cyclic.  To show this, we begin with a lemma about the vertex figures of cyclic polytopes.

\begin{lemma}
\label{lem:VertexFigures}
For any $d$ and $m$, the vertex figure of the last vertex of the cyclic polytope $C_d(m)$ is itself cyclic, being an instance of $C_{d-1}(m-1)$.  If $d$ is even, the vertex figures of all of the vertices of $C_d(m)$ are cyclic.
\end{lemma}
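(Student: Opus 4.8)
The plan is to establish the two clauses separately, deriving the second from the first. For the second clause, the key input is the symmetry of even-dimensional cyclic polytopes mentioned above: when $d$ is even, $C_d(m)$ has $2m$ combinatorial automorphisms that permute its $m$ vertices dihedrally, so some automorphism of $C_d(m)$ carries any prescribed vertex to the last vertex $v_m$. Because a combinatorial automorphism of a polytope restricts to an isomorphism between the vertex figures at a vertex and at its image, the second clause follows from the first. (The order-reversing automorphism $v_k\mapsto v_{m+1-k}$, which $C_d(m)$ has for every $d$, likewise makes the vertex figure at the \emph{first} vertex an instance of $C_{d-1}(m-1)$, though the lemma does not request this.) The degenerate case $d=1$ is immediate, so I would assume $d\ge 2$, which guarantees that all $m$ defining points really are vertices of $C_d(m)$.

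So the work is to show that the vertex figure of $C_d(m)$ at $v_m$ is combinatorially $C_{d-1}(m-1)$. First I would pass to boundary complexes: $C_d(m)$ is simplicial, so its boundary is a simplicial $(d-1)$-sphere, the vertex figure at $v_m$ is a simplicial $(d-1)$-polytope, and that vertex figure's boundary complex is the link of $v_m$ in $\partial C_d(m)$. Since a simplicial polytope is combinatorially determined by its boundary complex, and a simplicial complex by its set of facets, it suffices to check that the link of $v_m$ in $\partial C_d(m)$ and the boundary complex $\partial C_{d-1}(m-1)$ have the same facets. The facets of the link are precisely the sets $S\setminus\{m\}$, where $S$ runs over the facets of $C_d(m)$ that contain $v_m$.

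The remaining step is a short parity computation with Gale's evenness condition~\cite{Ziegler}, just as in Lemma~\ref{lem:Cyclic}: a $d$-element subset $S\subseteq\{1,\ldots,m\}$ is a facet of $C_d(m)$ precisely when any two elements of the complement $\{1,\ldots,m\}\setminus S$ are separated by an even number of elements of $S$. Now suppose $m\in S$ and write $S=S'\cup\{m\}$ with $S'\subseteq\{1,\ldots,m-1\}$ of size $d-1$. Every non-element of $S$ is less than $m$, so $m$ never lies strictly between two non-elements; hence deleting $m$ from both $S$ and the ground set leaves every ``number of elements in between'' unchanged, and $S$ satisfies Gale's condition in $\{1,\ldots,m\}$ if and only if $S'$ satisfies it in $\{1,\ldots,m-1\}$ --- that is, if and only if $S'$ is a facet of $C_{d-1}(m-1)$. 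Thus $S\mapsto S\setminus\{m\}$ is the desired facet bijection, so the link of $v_m$ in $\partial C_d(m)$ equals $\partial C_{d-1}(m-1)$, and the vertex figure at $v_m$ is an instance of $C_{d-1}(m-1)$.

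I do not expect a real obstacle along this route; the only substantive content is the parity observation, and the rest is standard bookkeeping (all $m$ points are vertices once $d\ge 2$; $m>d$ keeps $C_{d-1}(m-1)$ of full dimension $d-1$; the dihedral automorphism group of even-dimensional $C_d(m)$ is vertex-transitive). Difficulty would appear only if one insisted on a directly geometric argument --- realizing the vertex figure at $v_m$ by central projection of the other $m-1$ vertices away from $v_m$ and then verifying that the projected moment curve is again an affine moment curve of degree $d-1$ along which $v_1,\ldots,v_{m-1}$ still appear in order. That would require a careful choice of screen hyperplane and a check that the pole of the induced M\"obius reparametrization of the curve lies outside the parameter interval occupied by those points; routing through Gale's evenness and the simplicial-complex formalism makes all of that unnecessary.
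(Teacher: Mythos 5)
Your proposal is correct and follows essentially the same route as the paper: the first clause via Gale's evenness condition together with the observation that the last position of the vertex sequence cannot lie between two omitted vertices (the paper phrases this as appending a $1$ to the end of a bit string, you as deleting $m$ from the ground set), and the second clause via the dihedral combinatorial automorphisms of even-dimensional cyclic polytopes. The extra scaffolding you supply about boundary complexes and links is sound but not needed beyond what the paper already records.
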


\begin{proof}
We can represent a subset of the vertices of a cyclic polytope as a string of bits, with $1$\s for vertices in the subset and $0\mkern 1mu$\s for the ones not in it, in order along the moment curve.  By Gale\s evenness condition, a set of vertices of the right cardinality to be a facet actually forms a facet just when, for every two $0\mkern 1mu$\s in that bit string, the number of 1\s between them is even.  But adding a new $1$ at the end of the bit string can\t affect that test.  Thus, if $V$ is some subset of the vertices of $C_{d-1}(m-1)$ with $\lvert V\rvert=d-1$, then $V$ is a facet of $C_{d-1}(m-1)$ just when $V\cup\{v_m\}$ is a facet of $C_d(m)$ containing~$v_m$.  So the vertex figure of~$v_m$ in $C_d(m)$ is a $C_{d-1}(m-1)$.  

When $d$ is even, there are combinatorial automorphisms of $C_d(m)$ that permute its $m$ vertices dihedrally; so all vertices look the same.
\end{proof}

\begin{prop}
\label{prop:EquiEven}
If a necklace $\mathbf{s}$ has even length $n\ge 4$, is majority dominant, and has dipole tie-breaking, then the dual $\ConvEuc^n[\mathbf{s}]^\Delta$ of its fixed-angles polytope is an instance of the cyclic polytope $C_{n-3}(n)$.
\end{prop}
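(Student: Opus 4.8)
The plan is to imitate the proof of Proposition~\ref{prop:EquiOdd}: exhibit an explicit bijection between the $n$ facets of $\ConvEuc^n[\mathbf{s}]$ (indexed by the edges $\edge_1,\ldots,\edge_n$) and the $n$ vertices $v_1,\ldots,v_n$ of $C_{n-3}(n)$ under which $n-3$ facets meet at a vertex exactly when the corresponding $n-3$ vertices lie on a common facet. First some bookkeeping. Majority dominance forbids substring ties of width $w<n/2$, and dipole tie-breaking forbids ties of width $w=n/2$ (each diametral substring contains exactly one of the two poles, hence is strictly light or strictly heavy), so $\mathbf{s}$ has no substring ties at all; thus $\ConvEuc^n[\mathbf{s}]$ is simple of dimension $n-3$ with only trigon vertices, its dual is simplicial on $n$ vertices, and that dual is neighborly by Lemma~\ref{lem:Neighborly}. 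Rotate the necklace so that the light pole is the shangle $s_n$, hence the heavy pole is $s_{n/2}$. I would then prove the trigon characterization: for $1\le i<j<k\le n$, the edges $\edge_i,\edge_j,\edge_k$ extend to a trigon precisely when $j-i<n/2$, $k-j<n/2$, and $k-i\ge n/2$. Indeed, the three cut substrings have lengths $j-i$, $k-j$, $n+i-k$; under majority dominance each is light iff it is either short or diametral-and-light, and under dipole tie-breaking a diametral substring is light iff it contains $s_n$, i.e.\ iff it begins at a shangle position $>n/2$. The first two substrings begin at positions $i$ and $j$, which are $<n/2$ whenever those substrings are diametral (since $i<j<k\le n$), so neither can be diametral-and-light; hence $j-i,k-j<n/2$. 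The third substring begins at position $k=i+n/2>n/2$ exactly when it is diametral, and is then automatically light, which contributes only the constraint $k-i\ge n/2$. This step is where dipole tie-breaking does its essential work, and getting these boundary cases exactly right is the crux of the whole argument.

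Next I would introduce the bijection $i\mapsto\phi(i)$ defined by $\phi(i)=2i-1$ for $1\le i\le n/2$ and $\phi(i)=2i-n$ for $n/2<i\le n$. This sends $\{1,\ldots,n/2\}$ onto the odd residues and $\{n/2+1,\ldots,n\}$ onto the even residues, so it is a bijection of $\{1,\ldots,n\}$ with $\phi(n)=n$; it also reorders a straddling pair in a controlled way, namely for $i\le n/2<j$ one has $\phi(i)<\phi(j)$ iff $j-i\ge n/2$. Associating the facet of $\ConvEuc^n[\mathbf{s}]$ on which $\edge_i$ vanishes with the vertex $v_{\phi(i)}$, the claim to verify is that a triple $\{i,j,k\}$ is a trigon-triple iff, writing $\{\phi(i),\phi(j),\phi(k)\}$ in increasing order as $a<b<c$, both $b-a$ and $c-b$ are odd; by Gale's evenness condition, exactly as used in Lemma~\ref{lem:Cyclic}, this last condition says precisely that the complementary $n-3$ vertices form a facet of $C_{n-3}(n)$. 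I would establish the claim by splitting on $t=\lvert\{i,j,k\}\cap\{1,\ldots,n/2\}\rvert$: a trigon-triple must have $t\in\{1,2\}$, since $t\in\{0,3\}$ would force $k-i<n/2$. Correspondingly exactly $t$ of $\phi(i),\phi(j),\phi(k)$ are odd, so ``both gaps odd'' just means the element of the minority parity is the middle one, and one checks via the reordering rule for $\phi$ that this holds exactly under the inequalities $j-i<n/2$, $k-j<n/2$, $k-i\ge n/2$. The two cases are short and essentially symmetric.

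Finally, Lemma~\ref{lem:VertexFigures} gives a conceptual cross-check and in fact disposes of a large part of the verification. The facets of $\ConvEuc^n[\mathbf{s}]$ that avoid the vertex dual to $\edge_n$ are exactly the trigon vertices lying on the facet where $\edge_n$ has shrunk, which is the fixed-angles polytope of the odd-length necklace $\mathbf{s}'$ obtained by merging the light pole $s_n$ with a neighbor; merging the light pole with an adjacent shangle preserves majority dominance (a diametral substring of $\mathbf{s}$ missing the merge contains $s_{n/2}$ and so is heavy, while a longer one is heavy already), so Proposition~\ref{prop:EquiOdd} gives $\ConvEuc^{n-1}[\mathbf{s}']^\Delta\cong C_{n-4}(n-1)$, and by Lemma~\ref{lem:VertexFigures} this is the vertex figure of $v_n$ in $C_{n-3}(n)$; so all trigon-triples not containing the edge-index $n$ match up correctly, and only those containing it need the direct computation of the previous paragraph. (For $n=4$ the statement is immediate, as $\ConvEuc^4[\mathbf{s}]$, its dual, and $C_1(4)$ are all segments.) The one real obstacle is the case analysis tying the trigon inequalities to Gale evenness, and the delicate part there is the handling of diametral substrings, which is exactly the feature that dipole tie-breaking was designed to control.
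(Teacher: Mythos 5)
Your proof is correct, but it takes a genuinely different route from the paper's. The paper first reduces to the single necklace $[1,\ldots,1,2]$ (legitimate because only the light/tied/heavy pattern of substrings matters, and majority dominance plus dipole tie-breaking pin that pattern down), and then goes \emph{up} a dimension: $\ConvEuc^{n}[1,\ldots,1,2]$ is a facet of $\ConvEuc^{n+1}[1,\ldots,1]$, which is dual to $C_{n-2}(n+1)$ by Proposition~\ref{prop:EquiOdd}; since $n-2$ is even, Lemma~\ref{lem:VertexFigures} says \emph{every} vertex figure of $C_{n-2}(n+1)$ is a $C_{n-3}(n)$, and facets of a polytope are dual to vertex figures of its dual, so the result drops out with no new computation. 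You instead redo the odd-$n$ argument from scratch: a bespoke trigon criterion ($j-i<n/2$, $k-j<n/2$, $k-i\ge n/2$ --- which is right, the essential use of dipole tie-breaking being that the first two cut substrings can never contain the light pole $s_n$ while the third always does), an explicit relabeling $\phi$ replacing the map $i\mapsto-2i$ of the odd case, and a parity case analysis against Gale's evenness condition; I checked the $t=1$ and $t=2$ cases and they close up as you claim. What the paper's route buys is brevity and an explanation of \emph{why} the even case works (it inherits cyclicity from the odd case one dimension up, via the dihedral symmetry of the even-dimensional $C_{n-2}(n+1)$); what yours buys is an explicit facet-to-vertex dictionary for the even case, which the paper never writes down. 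Your closing ``cross-check'' paragraph is the only shaky part: its first sentence conflates facets with vertices, it goes \emph{down} a dimension rather than up and so cannot by itself determine the whole polytope (one vertex figure of the dual is not enough), and making it rigorous would require verifying that the duality of Proposition~\ref{prop:EquiOdd} for the merged necklace is compatible with your $\phi$. None of that matters, though, since your direct computation already covers all triples.
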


\begin{proof}
It suffices to show this for the particular necklace $\mathbf{s}=[1,\ldots,1,2]$, which is majority dominant and has dipole tie-breaking, since the combinatorial structure of $\ConvEuc^n[\mathbf{s}]$ depends only upon which substrings of $\mathbf{s}$ are light, tied, and heavy.  Going up by one dimension, the facial lattice of $\ConvEuc^{n+1}[1,\ldots,1]$ is dual to that of the cyclic polytope $C_{n-2}(n+1)$, by Prop.~\ref{prop:EquiOdd}.  Under that duality, each of the facets of $\ConvEuc^{n+1}[1,\ldots,1]$ is dual to one of the vertex figures of $C_{n-2}(n+1)$, which is a $C_{n-3}(n)$ by Lemma~\ref{lem:VertexFigures}.  But such a facet corresponds to the $(n+1)$\_gons in which a particular edge has shrunk to length~$0$.  That causes two vertices to merge into a supervertex of shangle $2$, so each facet of $\ConvEuc^{n+1}[1,\ldots,1]$ is a copy of $\ConvEuc^{n}[1,\ldots,1,2]$.
\end{proof}

\subsection{Achieving dihedral symmetry}

When $n$ is even, it is intriguing to consider how the fixed-angles polytope $P_\epsilon\coloneq \ConvEuc^n[1,\ldots,1,1+\epsilon]$ varies with $\epsilon$.  For $0<\epsilon<2$, that shangle necklace is majority dominant and has dipole tie-breaking, so the polytope $P_\epsilon$ is a $C_{n-3}(n)^\Delta$; and, since $n-3$ is odd, it has only four automorphisms.  When $\epsilon$ drops to $0$, however, the polytope $P_\epsilon$ must acquire $2n$ symmetries that permute its $n$ facets dihedrally.  Those symmetries appear because the $n/2$ substring ties that arise when~$\epsilon$ reaches~$0$ cause $n/2$ faces of $P_\epsilon$ to shrink from small instances of $\Delta_{n/2-2}$ to points.  For example, when $n=6$, the polyhedron $C_3(6)^\Delta$ has eight vertices and six faces: two triangles, two quadrilaterals, and two pentagons.  When $\epsilon$ reaches $0$, three of its edges shrink to points, those points becoming the vertices of the triangular base of the bipyramid $\ConvEuc^6[1,\ldots,1]$.\footnote{Of the three edges of $C_3(6)^\Delta$ that shrink, no two are adjacent, two bound each of the pentagonal faces, and one bounds each quadrilateral face.  There are two ways to choose such a triple of edges.}

\section{Fixing the edge lengths}
\label{sect:FixedLengths}
People fix the edge lengths of their polygons more often than they fix the vertex angles.  What can we say about that situation?

\subsection{The Kapovich--Millson homeomorphism}

Given some necklace $\mathbf{s}=[s_1,\ldots,s_n]$ of positive numbers with $S\coloneq s_1+\cdots+s_n$, we define an \emph{\sbargon} to be a planar $n$\_gon whose $k^\text{th}$ edge $\edge_k$ has length $\lvert\edge_k\rvert=s_k$, for all $k$.  The bar accent indicates that it is the edge lengths that we here constrain.  The polygon space $\PolygonSpace^n[\mathbf{s}]$ is the moduli space of shapes of \sbargon{}s, while $\PolygonSpace_c^n[\mathbf{s}]$ is the subset in which the \sbargon{}s are ccw-convex.
 
Given an \sbargon that is ccw-convex, Kapovich and Millson~\cite{KapMill95} associate the number~$2\pi s_k/S$ with the point on the unit circle where the normalized vector $\edge_k/s_k$ ends.  They then use a Schwarz--Christoffel map to conformally transform the unit disk into an $n$\_gon, the $k^\text{th}$ of those points becoming a vertex with external angle $2\pi s_k/S$.  That $n$\_gon is defined only up to an orientation-preserving similarity; but we can scale it to have unit perimeter (or unit area).  We thus transform each ccw-convex \sbargon into an \shatgon, getting a map $h_\mathbf{s}\colon \PolygonSpace_c^n[\mathbf{s}]\to\Conv^\nthin[\mathbf{s}]$ that Kapovich and Millson show to be a homeomorphism.  The convex-polygon space $\PolygonSpace_c^n[\mathbf{s}]$ is thus homeomorphic to a polytope by a preferred homeomorphism, which makes it a \emph{topological polytope}.  And we conclude:

\begin{cor}
When $n$ is odd and the fixed lengths in $\mathbf{s}$ are majority dominant, the subset $\PolygonSpace_c^n[\mathbf{s}]$ of the polygon space $\PolygonSpace^n[\mathbf{s}]$ in which the $n$\_gons are ccw-convex is a topological polytope that is combinatorially $C_{n-3}(n)^\Delta$.  The same holds when~$n$ is even if the fixed lengths in $\mathbf{s}$ also have dipole tie-breaking.
\end{cor}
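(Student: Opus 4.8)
The plan is to obtain this corollary by transporting our combinatorial results across the Kapovich--Millson homeomorphism, with essentially no new work.  First I would recall from \cite{KapMill95} that $h_{\mathbf{s}}\colon\PolygonSpace_c^n[\mathbf{s}]\to\Conv^\nthin[\mathbf{s}]$ is a homeomorphism onto the fixed-angles polytope built from \emph{the same} numerical necklace $\mathbf{s}$.  Pulling back along $h_{\mathbf{s}}$ the face structure of $\Conv^\nthin[\mathbf{s}]$ equips $\PolygonSpace_c^n[\mathbf{s}]$ with the structure of a topological polytope whose combinatorial type is, by definition, that of $\Conv^\nthin[\mathbf{s}]$.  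So it remains only to pin down the combinatorial type of $\Conv^\nthin[\mathbf{s}]$.

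Here I would stress that the hypotheses in the statement --- majority dominance and, when $n$ is even, dipole tie-breaking --- are conditions purely on the weights and lengths of the consecutive substrings of the numerical necklace $\mathbf{s}$; they are blind to whether the entries $s_k$ are being read as shangles or as edge lengths.  Hence a length necklace that is majority dominant (and, for even $n$, has dipole tie-breaking) is verbatim a shangle necklace of the sort to which Proposition~\ref{prop:EquiOdd} applies when $n$ is odd and Proposition~\ref{prop:EquiEven} applies when $n$ is even.  Those propositions tell us that the face lattice of $\ConvEuc^n[\mathbf{s}]$ is dual to that of $C_{n-3}(n)$, i.e.\ that $\ConvEuc^n[\mathbf{s}]$ is combinatorially $C_{n-3}(n)^\Delta$; and since $\ConvEuc^n[\mathbf{s}]$ and $\ConvHyp^n[\mathbf{s}]$ are combinatorially equivalent, the same is true of $\Conv^\nthin[\mathbf{s}]$ under either normalization.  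Combining this with the previous paragraph gives the corollary: $\PolygonSpace_c^n[\mathbf{s}]$ is a topological polytope that is combinatorially $C_{n-3}(n)^\Delta$.

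I do not expect any genuine obstacle: all of the substance --- the Schwarz--Christoffel construction of $h_{\mathbf{s}}$ in \cite{KapMill95} and Propositions~\ref{prop:EquiOdd} and~\ref{prop:EquiEven} --- is already in hand, and the corollary is just their assembly.  The one point worth a sentence of care is the phrase ``topological polytope'': one must specify which face structure $\PolygonSpace_c^n[\mathbf{s}]$ carries, and the sensible choice is the one pulled back along $h_{\mathbf{s}}$.  That choice is geometrically meaningful rather than arbitrary, since a boundary facet of $\PolygonSpace_c^n[\mathbf{s}]$ is precisely a locus on which \sbargons degenerate by having one external angle fall to $0$ --- two consecutive edges becoming collinear --- and $h_{\mathbf{s}}$ carries each such locus to the facet of $\Conv^\nthin[\mathbf{s}]$ on which the matching edge of the \shatgon has shrunk to a point.
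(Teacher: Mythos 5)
Your proposal is correct and matches the paper's own (implicit) argument exactly: the paper states this corollary immediately after describing the Kapovich--Millson homeomorphism $h_{\mathbf{s}}$, deriving it by transporting Propositions~\ref{prop:EquiOdd} and~\ref{prop:EquiEven} across that homeomorphism, just as you do. Your added remark that majority dominance and dipole tie-breaking are purely numerical conditions on the necklace --- indifferent to whether the entries are read as lengths or shangles --- is a worthwhile sentence of care, but it does not change the route.
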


\subsection{Ties, singular points, and corners}

Given some $n$\_gon, we can produce various $n$\_gons by reassembling its $n$ directed edges, tip to tail, in any of $(n-1)!$ cyclic orders.  Some assembly order always produces an $n$\_gon that is ccw-convex; so the polygon space $\PolygonSpace^n[\mathbf{s}]$ is covered by the fixed-lengths-convex top-polytopes $\PolygonSpace^n_c[\mathbf{s}']$, for the $(n-1)!$ cyclic reorderings~$\mathbf{s}'$ of~$\mathbf{s}$.  Those top-polytopes overlap just on their faces, in each of which some sets of edges share a common phase (and hence function as \emph{superedges}).

A \emph{subset tie} arises in the length vector $\mathbf{s}$ when some $w\le n/2$ of the lengths, not necessarily consecutive, have the same sum as the remaining $n-w$.  Each subset tie in~$\mathbf{s}$ generates a \emph{quadratic singular point} of the polygon space $\PolygonSpace^n[\mathbf{s}]$.  Such a singular point has a neighborhood in $\PolygonSpace^n[\mathbf{s}]$ that is analytically isomorphic to a neighborhood of the origin in the zero set of a quadratic form on $\mathbb{R}^{n-2}$ whose signature is $(w-1,n-w-1)$.\footnote{Kapovich and Millson only sketch the structure of the singular points in~\cite{KapMill95}, but the methods that they use in~\cite{KapMill97,KapMill99} to show that the singularities in a moduli space of spherical linkages are Morse, and hence quadratic, apply to any space of constant curvature.}  

When the length vector~$\mathbf{s}$ has such a subset tie, there are $w!(n-w)!$ cyclic reorderings~$\mathbf{s}'$ of~$\mathbf{s}$ that convert both of the tied subsets into substrings, thus giving a necklace~$\mathbf{s}'$ with a substring tie of width $w$.  
That substring tie causes the fixed-angles polytope $\ConvEuc^n[\mathbf{s}']$ to have a digon vertex, and we refer to the corresponding point of the fixed-lengths-convex top-polytope $\PolygonSpace^n_c[\mathbf{s}']$ as a \emph{digon corner}.  The quadratic singular point of a polygon space $\PolygonSpace^n[\mathbf{s}]$ that arises from a subset tie of width $w$ in $\mathbf{s}$ is thus a digon corner of each of the $w!(n-w)!$ topological polytopes $\PolygonSpace^n_c[\mathbf{s}']$ that touch it.

\subsection{Diffeomorphism issues}

When the necklace $\mathbf{s}$ is free of substring ties, the fixed-angles polytope $\ConvEuc^n[\mathbf{s}]$ has only simple vertices, so it is a smooth $(n-3)$\_manifold with corners that happens to be flat.  As for the fixed-lengths-convex top-polytope $\PolygonSpace_c^n[\mathbf{s}]$, near any point in it, the $n$\_gon has at least  three external angles that are positive, and we can use the other $n-3$ external angles as local coordinates, constrained to be nonnegative.  So the top-polytope $\PolygonSpace_c^n[\mathbf{s}]$ is also a smooth $(n-3)$\_manifold with corners.  Those two spaces seem likely to be diffeomorphic, and perhaps some diffeomorphism can be shown to exist using obstruction theory~\cite{Davis}. 

When the necklace $\mathbf{s}$ has substring ties, there is no hope for a diffeomorphism.  That is easy to see for a substring tie of width $w\ge 3$.  The resulting digon vertex of the polytope $\ConvEuc^n[\mathbf{s}]$ is nonsimple; so both that polytope and the homeomorphic top-polytope $\PolygonSpace_c^n[\mathbf{s}]$ are not smooth manifolds with corners.\footnote{Joyce~\cite{Joyce} proposes a more general notion of a \emph{smooth manifold with g-corners}; but the top-polytope $\PolygonSpace_c^6[1,\ldots,1]$ for ccw-convex equilateral hexagons isn\t one of those either.}  For a substring tie of width $w=2$, the digon vertex of $\ConvEuc^n[\mathbf{s}]$ is simple, so that polytope might still be a smooth manifold with corners.  Once $n\ge 5$, however, the top-polytope $\PolygonSpace_c^n[\mathbf{s}]$ is bent, near its digon corner, in a way that prevents it from being a smooth manifold with corners; so the situation remains hopeless.

We close with a warning:  Even when $\mathbf{s}$ is free subset ties, so that some diffeomorphism likely exists, the Kapovich--Millson map $h_\mathbf{s}\colon \PolygonSpace_c^n[\mathbf{s}]\to\ConvEuc^n[\mathbf{s}]$ is not a diffeomorphism.  Consider an \sbargon that approaches the boundary of the top-polytope $\PolygonSpace_c^n[\mathbf{s}]$, say because its external angle $\rho$ at the vertex joining its edges of lengths $s_k$ and $s_{k+1}$ approaches zero.  Let $r$ be the length of the edge that joins the vertices with angles $s_k$ and $s_{k+1}$ in the corresponding \shatgon.  As the angle $\rho$ goes to zero, the length $r$ does also, but at a slower rate.  By the Schwarz--Christoffel integral, $r$ goes to zero like $\rho^{\mkern 1mu q}$ where $q=1-2(s_k+s_{k+1})/S$, and $q<1$ prevents the map~$h_\mathbf{s}$ from being a diffeomorphism.


\end{document}